\definecolor{cadmiumgreen}{rgb}{0.0, 0.42, 0.24}
\newtheorem{thm}{Theorem}[section]
\newtheorem*{thm*}{Theorem}
\newtheorem{prop}[thm]{Proposition}
\newtheorem{lem}[thm]{Lemma}
\newtheorem{cor}[thm]{Corollary}
\theoremstyle{definition}
\newtheorem{dfn}[thm]{Definition}
\newtheorem*{notation*}{Notation}
\theoremstyle{remark}
\newtheorem{eg}[thm]{Example}
\newtheorem{rmk}[thm]{Remark}
\theoremstyle{definition}
\numberwithin{equation}{section}
\newcommand{\RR}{\mathbb{R}}
\newcommand{\ZZ}{\mathbb{Z}}
\newcommand{\EE}{\mathbb{E}}
\newcommand{\dd}{{\rm d}}
\newcommand{\Nvert}{\mathcal{N}_{V}}
\newcommand{\Nedge}{\mathcal{N}_{E}}
\newcommand{\tr}{\intercal}
\newcommand{\cut}{\partial}
\newcommand{\forests}{\mathcal{F}}
\newcommand{\boldmu}{\bm{\mu}}
\newcommand{\bone}{\mathbf{1}}
\newcommand{\avgdeg}{\mathrm{avg.deg}}
\newcommand{\secondsum}{\gamma}
\newcommand{\mucan}{\mu_{\rm can}}
\subjclass[2020]{
\href{https://mathscinet.ams.org/msc/msc2020.html?t=05C30}{05C30},
\href{https://mathscinet.ams.org/msc/msc2020.html?t=31C20}{31C20},
\href{https://mathscinet.ams.org/msc/msc2020.html?t=82B20}{82B20},
\href{https://mathscinet.ams.org/msc/msc2020.html?t=05B35}{05B35},
\href{https://mathscinet.ams.org/msc/msc2020.html?t=14T15}{14T15}}
\keywords{Two forest, spanning tree, cut size, electrical network, potential kernel, Laplacian, resistance}
\date{\today}
\begin{document}
\title{Counting two-forests and random cut size via potential theory}
\author{Harry Richman}
\email{\href{mailto:hrichman@uw.edu}{hrichman@uw.edu}}
\author{Farbod Shokrieh}
\email{\href{mailto:farbod@uw.edu}{farbod@uw.edu}}
\author{Chenxi Wu}
\email{\href{mailto:cwu367@math.wisc.edu}{cwu367@math.wisc.edu}}

\begin{abstract}
We prove a lower bound on the number of spanning two-forests in a graph, in terms of the number of vertices, edges, and spanning trees.
This implies an upper bound on the average cut size of a random two-forest.
The main tool is an identity relating the number of spanning trees and two-forests to pairwise effective resistances in a graph.
Along the way, we make connections to potential theoretic invariants on metric graphs.
\end{abstract}

\maketitle

\setcounter{tocdepth}{1}
\tableofcontents

\section{Introduction} 
\renewcommand*{\thethm}{\Alph{thm}}

For a connected graph $G$, 
let $\kappa(G)$ denote the number of spanning trees 
and let $\kappa_2(G)$ denote the number of  two-component spanning forests.
We use the term {\em two-forest} to mean a two-component spanning forest.
The first main result of this paper is to prove the following lower bound on $\kappa_2(G)$.
\begin{thm}[Theorem~\ref{thm:main-two-forests}]
\label{thm:main-1}
For any connected graph $G = (V,E)$,
\begin{equation}
\label{eq:main-1}
    \frac{\kappa_2(G)}{\kappa(G)} \geq \frac{(|V| - 1)^2}{4|E|} \, .
\end{equation}
\end{thm}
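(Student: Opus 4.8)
Write $n=|V|$ and $m=|E|$, and for an edge $e=\{u,v\}$ let $R_e$ denote the effective resistance between its endpoints. The plan is to route everything through the single quantity $\sum_{e\in E}R_e^2$, squeezing it between a lower bound coming from Foster's identity and an upper bound coming from the two-forest count. Recall from the main identity that for an edge $e=\{u,v\}$ the number of two-forests separating $u$ and $v$ equals $\kappa(G)R_e$ (equivalently, the number of spanning trees through $e$). Summing over $E$ gives Foster's identity $\sum_{e\in E}R_e=n-1$, and summing the separation count over edges gives the cut-size identity $\sum_{F}|\cut(F)|=(n-1)\kappa(G)$, the sum being over two-forests $F$, where $\cut(F)$ is the set of edges of $G$ joining the two components of $F$.

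First I would produce the lower bound. By Cauchy--Schwarz applied to Foster's identity,
\[
(n-1)^2=\Big(\sum_{e\in E}R_e\Big)^2\le m\sum_{e\in E}R_e^2,\qquad\text{hence}\qquad \sum_{e\in E}R_e^2\ge\frac{(n-1)^2}{m}.
\]
This step is an equality precisely for edge-transitive graphs, where all $R_e$ coincide. The theorem then follows at once from the matching upper bound
\[
\sum_{e\in E}R_e^2\le\frac{4\,\kappa_2(G)}{\kappa(G)} .
\]
To reformulate this, I would use $\kappa(G)R_e=\#\{F:\ e\in\cut(F)\}$ to rewrite the left side as a weighted forest count:
\[
\kappa(G)\sum_{e\in E}R_e^2=\sum_{F}\ \sum_{e\in\cut(F)}R_e ,
\]
so the bound is equivalent to the averaged statement that a uniformly random two-forest $F$ has expected cut-resistance $\EE_F\big[\sum_{e\in\cut(F)}R_e\big]\le 4$ (equivalently, two independent uniform spanning trees share at most $4\kappa_2/\kappa$ edges in expectation).

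The main obstacle is exactly this last inequality, and the source of its difficulty is that it cannot be made pointwise: for a balanced cut in a dense graph the cut-resistance $\sum_{e\in\cut(F)}R_e$ can be of order $n$, so any proof must exploit that such balanced two-forests are exponentially rare and that $\kappa_2$ is dominated by two-forests having one very small component. I therefore expect to prove it by controlling the distribution of component sizes of a random two-forest --- for instance through a charging/injection argument against (tree, forest) pairs, or through the potential-theoretic description of the canonical measure $\mu_{\mathrm{can}}$ --- rather than by any term-by-term estimate. The constant $4$ is sharp, and this is forced by the complete graph: for $G=K_n$ one has $R_e=2/n$ for every edge, so $\sum_{e\in E}R_e^2=2(n-1)/n=(n-1)^2/m$ makes the Cauchy--Schwarz step an equality, while $\kappa_2/\kappa\to\tfrac12$ makes the crux an asymptotic equality, both matching $\tfrac{(n-1)^2}{4m}\to\tfrac12$. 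Establishing the averaged cut-resistance bound with this optimal constant is the step I expect to be the most technical.
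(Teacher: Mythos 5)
Your reduction is correct and, in fact, coincides with the skeleton of the paper's own argument: by Cauchy--Schwarz and Foster's identity one gets $\sum_{e\in E} r(e^+,e^-)^2 \geq (|V|-1)^2/|E|$, so the theorem follows once one establishes
\[
	\sum_{e\in E} r(e^+,e^-)^2 \;\leq\; \frac{4\,\kappa_2(G)}{\kappa(G)} \, .
\]
Your reformulations of this inequality are also correct (expected cut-resistance $\EE_F\bigl[\sum_{e\in\cut F} r(e^+,e^-)\bigr]\leq 4$ for a uniform two-forest $F$; equivalently, a bound on the expected number of shared edges of two independent uniform spanning trees), as is your observation that no pointwise, forest-by-forest bound can work. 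But this inequality is the entire content of the theorem beyond Cauchy--Schwarz, and you do not prove it: you only list candidate strategies (a charging/injection argument against (tree, forest) pairs, or control of the component-size distribution of a random two-forest) without developing any of them. That is a genuine gap, and it sits exactly at the step you yourself flag as the hard one.

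For comparison, the paper closes this gap not by any distributional or charging argument but by proving an exact identity (Theorem~\ref{thm:tau-to-forests}): for any fixed base vertex $q$,
\[
	\frac{\kappa_2(G)}{\kappa(G)} \;=\; 3\secondsum(G) + \frac14 \sum_{e\in E} r(e^+,e^-)^2 ,
	\qquad
	\secondsum(G) = \frac14 \sum_{e\in E} \bigl(r(e^+,q)-r(e^-,q)\bigr)^2 \;\geq\; 0 \, ,
\]
so the crux inequality follows simply by discarding the manifestly nonnegative term $3\secondsum(G)$. The identity itself is obtained by purely algebraic and potential-theoretic means: the Liu--Chow enumeration $\kappa_2(G) = \sum_{v} \kappa_2(v|q) - \sum_{e} \kappa_3(e^+|e^-|q)$, the Desnanot--Jacobi determinant identity giving $\kappa(G)\,\kappa_3(x|y|q) = \kappa_2(x|q)\kappa_2(y|q) - \kappa_2(xy|q)^2$, and identities relating effective resistance to the potential kernel $j_q$, organized via the curvature vector $\boldmu$ and the resistance matrix $R$. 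So your instincts were right that the bound holds only on average and that $K_n$ forces the constant $4$; what is missing is the mechanism, namely that the discrepancy $4\kappa_2/\kappa - \sum_e r(e^+,e^-)^2$ is itself a sum of squares --- something the approaches you sketch would be unlikely to produce. (A minor side remark: equality in your Cauchy--Schwarz step holds if and only if all edge resistances are equal, which is strictly weaker than edge-transitivity.)
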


Given a two-forest $F$, the {\em (edge) cut set} of $F$
is defined as the set of edges which have one endpoint in each component of $F$;
we use $\cut F$ to denote the cut set.
We also obtain the following upper bound on the average size of $\cut F$. 
\begin{thm}[Theorem~\ref{thm:main-cut-set}]
\label{thm:main-2}
Suppose $G = (V,E)$ is a connected graph.
Then, for a uniformly random two-forest $F$, the expected size of the cut set $\cut F$ satisfies
\begin{equation}
\label{eq:main-2}
	\mathbb{E}( |\cut F| ) \;\leq\; 2(\avgdeg) \left(1 + \frac{1}{|V|-1}  \right) \, ,
\end{equation}
where $\avgdeg = 2|E| / |V|$ is the average degree of $G$.
\end{thm}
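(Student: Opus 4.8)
The plan is to reduce the bound to the exact count $\kappa(G)$ of spanning trees by a double-counting argument, and then to invoke Theorem~\ref{thm:main-1}. First I would write the expected cut size as
\begin{equation*}
\mathbb{E}(|\cut F|) = \frac{1}{\kappa_2(G)} \sum_{F} |\cut F|,
\end{equation*}
where the sum runs over all spanning two-forests $F$. Expanding $|\cut F| = \sum_{e \in E} \one[e \in \cut F]$ and exchanging the order of summation, this becomes $\kappa_2(G)^{-1} \sum_{e \in E} \#\{F : e \in \cut F\}$, so the task is to count the pairs $(F, e)$ in which $F$ is a two-forest and $e$ is an edge joining its two components.

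The key step is to set up a bijection between such pairs $(F, e)$ and the pairs $(T, e)$ in which $T$ is a spanning tree and $e \in T$ is one of its edges. Given a two-forest $F$ and an edge $e \in \cut F$, the endpoints of $e$ lie in the two distinct components of $F$, so $F \cup \{e\}$ is connected, spans $V$, and has exactly $|V| - 1$ edges --- hence it is a spanning tree containing $e$. Conversely, deleting any edge $e$ from a spanning tree $T$ splits it into exactly two components, producing a two-forest $F = T \setminus \{e\}$ with $e \in \cut F$. These two operations are mutually inverse, so
\begin{equation*}
\sum_{e \in E} \#\{F : e \in \cut F\} = \#\{(T, e) : T \text{ a spanning tree}, \, e \in T\} = (|V| - 1)\,\kappa(G),
\end{equation*}
since every spanning tree has exactly $|V| - 1$ edges. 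This yields the exact identity $\mathbb{E}(|\cut F|) = (|V| - 1)\,\kappa(G)/\kappa_2(G)$.

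To finish, I would apply Theorem~\ref{thm:main-1} in the form $\kappa(G)/\kappa_2(G) \leq 4|E|/(|V|-1)^2$, giving
\begin{equation*}
\mathbb{E}(|\cut F|) = (|V| - 1)\,\frac{\kappa(G)}{\kappa_2(G)} \leq \frac{4|E|}{|V| - 1}.
\end{equation*}
A short algebraic check confirms that this matches the stated bound: with $\avgdeg = 2|E|/|V|$ one has $2(\avgdeg)\bigl(1 + \tfrac{1}{|V|-1}\bigr) = \frac{4|E|}{|V|} \cdot \frac{|V|}{|V|-1} = \frac{4|E|}{|V|-1}$.

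The only real obstacle here is Theorem~\ref{thm:main-1} itself, which carries all of the analytic content (via effective resistances); once it is granted, the present statement follows from the elementary bijection above with no further estimation. I therefore expect the write-up to be short, with care needed only to verify that the two-forest/spanning-tree correspondence is genuinely a bijection --- in particular that $F \cup \{e\}$ is acyclic, which holds since $F$ has no cycles and $e$ joins its two distinct components.
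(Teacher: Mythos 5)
Your proposal is correct and follows essentially the same route as the paper: your bijection between pairs $(F,e)$ with $e \in \cut F$ and pairs $(T,e)$ with $e \in T$ is exactly the paper's Lemma~\ref{lem:average-cut} (the Kassel--Kenyon--Wu identity $\kappa_2(G)\,\mathbb{E}(|\cut F|) = \kappa(G)(|V|-1)$), and the remaining step --- applying Theorem~\ref{thm:main-1} and simplifying $4|E|/(|V|-1)$ into the stated form --- matches the paper's proof of Theorem~\ref{thm:main-cut-set} verbatim.
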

Here by ``uniformly random two-forest'' we mean a random two-forest under the uniform probability distribution on all possible two-forests.

If $G$ is $d$-regular on $n$ vertices, Theorem \ref{thm:main-2} states
\begin{equation}
\label{eq:main-2-regular}
	\mathbb{E}( |\cut F| ) \leq 2d \left(1 + \frac{1}{n-1}  \right) \, .
\end{equation}

The main ingredient in proving the inequalities in Theorems~\ref{thm:main-1} and \ref{thm:main-2} is a new result relating the number of spanning trees and two-forests to potential-theoretic invariants of the graph. 
Let $r(x,y)$ denote the {\em effective resistance} between $x, y \in V$ (see \S\ref{sec:resistance} for a definition).

\begin{thm}[Theorem~\ref{thm:tau-to-forests}]
\label{thm:main-3}
Let $G = (V, E)$ be a connected graph, and fix a vertex $q \in V$. 
Then
\[
	\frac{\kappa_2(G)}{\kappa(G)}  = \frac{1}{4} \sum_{e \in E} {r(e^+, e^-)^2} + \frac{3}{4} \sum_{e \in E} {\left( r(e^+,q) - r(e^-,q) \right)^2} \, ,
\]
where $e^+$ and $e^-$ denote the endpoints of an edge $e$.
\end{thm}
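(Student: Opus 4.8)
The plan is to reduce the identity to the combinatorial meaning of effective resistance and then to a potential-theoretic (energy) computation. I would begin from the two-forest interpretation of resistance established in \S\ref{sec:resistance}. Let $L_q$ be the reduced Laplacian (deleting the row and column of $q$), $G_q = L_q^{-1}$ the associated Green's function, and for a set $S\subseteq V\setminus\{q\}$ let $\one_S$ be its indicator vector (so $\one_x=\one_{\{x\}}$). For each spanning two-forest $F$ write $B_F$ for the vertex set of the component \emph{not} containing $q$ and put $\chi_F=\one_{B_F}$. The matrix-tree theorem yields the outer-product expansion
\[
	G_q \;=\; \frac{1}{\kappa(G)}\sum_{F}\chi_F\,\chi_F^{\tr},
\]
from which $r(x,q)=(G_q)_{xx}=\tfrac{1}{\kappa(G)}\#\{F : x\in B_F\}$ and, writing $df(e):=f(e^+)-f(e^-)$, $r(e^+,e^-)=\tfrac{1}{\kappa(G)}\sum_F\bigl(d\chi_F(e)\bigr)^2$. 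Tracing this expansion against $L_q$ recovers Foster's identity $\sum_e r(e^+,e^-)=|V|-1$ and, more generally, shows that the \emph{linear} resistance sums compute cut-weighted forest counts such as $\sum_F|\cut F|=(|V|-1)\kappa(G)$. I would use these as a warm-up to fix the method.

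Next I would rewrite the two terms on the right-hand side as energies. Setting $h(x):=r(x,q)$ one has $r(e^+,q)-r(e^-,q)=dh(e)$, so the second sum is exactly the Dirichlet energy $\sum_{e\in E}\bigl(r(e^+,q)-r(e^-,q)\bigr)^2=h^{\tr}L\,h$. Writing $b_e=\one_{e^+}-\one_{e^-}$, the first sum is $\sum_e\bigl(b_e^{\tr}G_q b_e\bigr)^2$, i.e.\ the sum of squares of the diagonal entries of the transfer-resistance matrix $\bigl(b_e^{\tr}G_q b_f\bigr)_{e,f}$; being a \emph{squared diagonal} rather than a trace, this cannot be read off from the spectrum alone. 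Substituting the forest expressions for $r(x,q)$ and $r(e^+,e^-)$ turns each term into a double sum over ordered pairs of two-forests, and a short per-edge computation reduces the whole statement to the combinatorial identity
\[
	\sum_{(F_1,F_2)}\Bigl(\tfrac14\,|\cut F_1\cap\cut F_2|+\tfrac34\,\langle d\chi_{F_1},d\chi_{F_2}\rangle\Bigr)\;=\;\kappa(G)\,\kappa_2(G),
\]
where a shared cut edge contributes $+1$ when $F_1,F_2$ separate its endpoints coherently and $-\tfrac12$ when they do so incoherently.

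I expect this to be the main obstacle. All the standard spectral and matrix-tree tools produce \emph{rooted} or cut-weighted forest counts (for instance the rooted count $\sum_F a_Fb_F$, with $a_F,b_F$ the two component sizes, is an elementary symmetric function of the Laplacian eigenvalues, and $\sum_F|\cut F|=(|V|-1)\kappa(G)$), whereas $\kappa_2(G)=\sum_F 1$ is the \emph{unweighted} count and is not a spectral invariant at all. The coefficients $\tfrac14$ and $\tfrac34$ are precisely what is needed for the quadratic resistance terms to recover this unweighted count, so pinning them down is the heart of the argument.

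To carry out that step I would pass to the metric-graph framework and interpret $\kappa_2(G)/\kappa(G)$ as the potential-theoretic invariant $\tau$ attached to the graph, expressed as a sum over edges of an integral of a resistance profile. Placing a point $x_s$ at parameter $s\in[0,1]$ along an edge $e$ and computing $r(x_s,q)$ by a wye--delta reduction of the three-terminal network on $\{e^-,e^+,q\}$, I expect the integral $\int_0^1(\cdots)\,ds$ to produce exactly $\tfrac14\,r(e^+,e^-)^2+\tfrac34\,\bigl(h(e^+)-h(e^-)\bigr)^2$, the $\tfrac14$ coming from the local resistance across $e$ and the $\tfrac34$ from the global potential drop; summing over edges then finishes the proof. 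As a sanity check one can test the identity on the cycle $C_n$, where $r(e^+,e^-)=\tfrac{n-1}{n}$ and the potential differences along the edges are $\tfrac{n-1-2k}{n}$: this gives $\tfrac14\cdot\tfrac{(n-1)^2}{n}+\tfrac34\cdot\tfrac{n^2-1}{3n}=\tfrac{n-1}{2}=\binom{n}{2}/n=\kappa_2(C_n)/\kappa(C_n)$, matching the claim.
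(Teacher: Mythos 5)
Your setup is correct as far as it goes: the expansion $G_q=\frac{1}{\kappa(G)}\sum_F\chi_F\chi_F^{\tr}$ is a valid restatement of \eqref{eq:potential-trees}, your reduction of the theorem to the pair-counting identity over ordered pairs of two-forests (weight $+1$ per coherently shared cut edge, $-\tfrac12$ per incoherently shared one) is an accurate reformulation, and the $C_n$ sanity check is right. But you yourself flag that this identity is the heart of the matter, and the argument you propose for it does not work. The plan to ``interpret $\kappa_2(G)/\kappa(G)$ as the potential-theoretic invariant $\tau$'' is circular: the only bridge between the \emph{unweighted} count $\kappa_2(G)$ and the metric-graph tau constant is the very theorem being proved (see Remark~\ref{rmk:tau2}, which derives $\tau(G)=\frac13\kappa_2(G)/\kappa(G)-\frac1{12}|E|+\frac16 g$ \emph{from} Theorem~\ref{thm:tau-to-forests}, not the other way around). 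It is also quantitatively wrong: $\kappa_2(G)/\kappa(G)$ does not equal $\tau(\Gamma)$, and the wye--delta computation along an edge gives
\[
\frac14\int_0^1\Bigl(\tfrac{\partial}{\partial s}\,r(x_s,q)\Bigr)^2\,\dd s
=\frac1{12}\bigl(1-r(e^+,e^-)\bigr)^2+\frac14\bigl(r(e^+,q)-r(e^-,q)\bigr)^2 ,
\]
whose sum over edges is $\tau(G)=\eta(G)+\secondsum(G)$ (equation \eqref{eq:tau vs secondsum}), \emph{not} $\frac14 r(e^+,e^-)^2+\frac34\bigl(r(e^+,q)-r(e^-,q)\bigr)^2$. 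On $C_n$, for instance, these per-edge integrals total $\tau(C_n)=n/12$, nowhere near $\kappa_2/\kappa=(n-1)/2$. So no integral of the resistance profile along edges will ``produce exactly'' your per-edge terms, and you have no independent way to equate such a sum with $\kappa_2/\kappa$.

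What is missing is precisely a device that turns the unweighted count $\kappa_2(G)$ into resistance and potential data, and this is where the paper's proof goes a different way. It uses the Liu--Chow identity (Proposition~\ref{prop:forest-count}), $\kappa_2(G)=\sum_{v}\kappa_2(v|q)-\sum_{e}\kappa_3(e^+|e^-|q)$, to express $\kappa_2$ through \emph{rooted} forest counts, and then the Desnanot--Jacobi determinant identity (Proposition~\ref{prop:3-forest}) to write $\kappa_3(e^+|e^-|q)/\kappa(G)=r(e^+,q)\,r(e^-,q)-j_q(e^+,e^-)^2$; this yields Proposition~\ref{prop:forest-resistance}, after which the theorem follows by bookkeeping with Propositions~\ref{prop:res-identities} and \ref{prop:tau-equiv}. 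To complete your outline you would need either this determinantal bridge (or an equivalent one), or else a direct combinatorial proof of your pair-counting identity, which you have not supplied.
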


The bound in Theorem~\ref{thm:main-1} is obtained by applying the Cauchy--Schwarz inequality and Foster's identity to the first sum in Theorem~\ref{thm:main-3},
and observing that the second sum is nonnegative.
We remark that the second summation, which we will denote by $3\, \secondsum(G)$, is closely related to a certain ``capacity'' of $G$ called the {\em tau constant} $\tau(G)$. 
See \S\ref{sec:cap} for a brief discussion. We pursue this connection with capacity theory further in an upcoming paper (\cite{richman-shokrieh-wu}).

\subsection{Related work}
Estimates of $\mathbb E(|\cut F|)$ for a uniformly random two-forest $F$ were previously studied by Kassel, Kenyon, and Wu in \cite{kassel-kenyon-wu}.
In \cite[Equation (10)]{kassel-kenyon-wu}, it is shown that for (finite graphs limiting to) $\ZZ^r$, which has degree $d = 2r$,
the expected cut set size $|\cut F|$ 
for a random two-forest is $4r + o(1)$.
This gives a family of examples where the bound in Theorem~\ref{thm:main-2}
is sharp, up to the leading term.

Theorem~\ref{thm:main-1} can be viewed in the context of matroid theory as follows: a connected graph on $n$ vertices and $m$ edges defines a graphic matroid on $m$ elements, which has rank $r = n-1$.
The ratio $\kappa_2(G) / \kappa(G)$ is equal to $I_{r-1}/I_r$ for this matroid,
where $I_k$ denotes the number of independent sets of size $k$ in a matroid.
Moreover for a simple graph, the graphic matroid satisifes $1/m = I_0/I_1$.

There has been much work focused on 
proving bounds among the ratios $I_{k-1} / I_k$ as $k$ varies, for an arbitrary matroid.
Mason conjectured in \cite{mason} that these ratios satisfy
\begin{equation}
\label{eq:mason-bound}
\frac{I_{k}}{I_{k+1}} \geq \left(1 + \frac1{k}\right)\left(1 + \frac1{m-k}\right) \frac{I_{k-1}}{I_{k}}
\end{equation}
for each $k\geq 1$, for any matroid on $m$ elements.
In particular, these bounds imply
\begin{equation}
\label{eq:mason-combined}
\frac{I_{r-1}}{I_r} \geq \frac{r}{m-r+1}
\end{equation}
for a simple matroid of rank $r$.
Mason's conjecture \eqref{eq:mason-bound} was recently proved by Anari--Liu--Oveis-Gharan--Vinzant~\cite{anari-etal} and by Br\"{a}nd\'{e}n--Huh~\cite{branden-huh}.

Note, however, that Theorem~\ref{thm:main-1} gives a stronger bound 
\[
	\frac{I_{r-1}}{I_r} \geq \frac{r^2}{4m}
\]
for graphic matroids, when $r \geq 17$ and $m - r \geq 4$ (in graph theoretic terms, when $|V| \geq 18$ and $|E| - |V| \geq 3$). 
It may be of interest to investigate whether bounds on $I_{k-1}/I_{k}$ can be strengthened for graphic matroids beyond \eqref{eq:mason-bound}, for general values of $k$. 

\subsection*{Structure of the paper}
In \S\ref{sec:graphs}, we set our notation and terminology about graphs.
In \S\ref{sec:forests}, we describe some basic results related to the enumeration of rooted spanning forests, as well as the expected size of cut sets of two-forests.
In \S\ref{sec:potential}, we start by reviewing the notions of effective resistance and potential kernel functions and their relation with the enumeration of rooted spanning forests. 
We then outline some useful properties of the resistance matrix of graphs, 
and introduce the gamma constant $\secondsum(G)$ (which appears in the identity in Theorem~\ref{thm:main-3}) in \S\ref{sec:gamma}.
In \S\ref{sec:thmC} we prove Theorem~\ref{thm:main-3}, and in \S\ref{sec:thmAB} we prove Theorems~\ref{thm:main-1} and \ref{thm:main-2}. 
We conclude the paper with a series of examples in \S\ref{sec:examples}. 
We additionally have included a basic running example throughout the paper for the ``house graph'' depicted in Figure~\ref{fig:house}.

\subsection*{Acknowledgements}
We would like to thank Cynthia Vinzant for helpful discussions. 
The first and second named authors were partially supported by the Transatlantic Research Partnership of the Embassy of France in the United States and the FACE Foundation. The second-named author was partially supported by NSF CAREER DMS-2044564 grant. 
The third-named author was partially supported by the Simons Collaboration grant. 

\renewcommand*{\thethm}{\arabic{section}.\arabic{thm}}

\section{Graphs and matrices} \label{sec:graphs}
A graph is assumed to have a finite number of vertices and edges.
We allow parallel edges (multi-edges) but no loop edges.
We assume all graphs are connected unless otherwise specified.

We assume that all graphs in this paper are oriented.
Formally, $G$ is equipped with a pair of maps $h: E \to V$ and $t: E \to V$,
such that $h(e)$ and $t(e)$ are the endpoints of $e$.
We abbreviate $h(e) = e^+$ and $t(e) = e^-$.

The {\em genus} of a graph $G = (V,E)$ is $g(G) = |E| - |V| + 1$.
Given $v\in V$, 
let $\Nedge(v)$ denote the set of edges which are incident to $v$ (in either orientation),
and let $\Nvert(v)$ denote the multiset of vertices which are adjacent to $v$ with multiplicity given by the number of connecting edges.

We assume each graph $G = (V,E)$ is implicitly equipped with a linear ordering on its vertex set, $V \cong [n] = \{1,2,\ldots,n\}$,
which allows us to order the rows and columns of vertex-indexed graph matrices.
The choices of linear ordering on $V$ and orientation on $E$ are auxiliary in the sense that the quantities we study will not depend on which order or orientation are chosen.
The linear ordering on $V$ can also be used to induce a particular orientation on the edge set, by orienting each edge towards its larger endpoint. 

Given a graph $G = (V,E)$, let $B = B(G)$ denote the {\em (signed) incidence matrix} of $G$. 
This is the matrix in $\RR^{V\times E}$ with entries
\begin{equation}
B_{v,e} = \begin{cases}
1 &\text{if } v = e^+ , \\
-1 &\text{if } v = e^- ,\\
0 &\text{otherwise}.
\end{cases}
\end{equation}
Let $L = L(G)$ denote the Laplacian matrix of $G$ defined by
$L = B B^\tr$.
Its entries are
\[
L_{v,w} = \begin{cases}
\deg(v) &\text{if } v = w , \\
-\left| \{\text{edges between $v$ and $w$}\} \right| &\text{if } v \neq w.
\end{cases}
\]

\begin{eg}
Consider the ``house graph'', shown in Figure~\ref{fig:house}.
We will use this graph as a running example throughout the paper.
\begin{figure}[h]
\centering
	\begin{tikzpicture}[scale=0.5]
	\coordinate (1) at (0,0);
	\coordinate (2) at (2,0);
	\coordinate (3) at (0,1.8);
	\coordinate (4) at (2,1.8);
	\coordinate (5) at (1,3);
	
	\foreach \c in {1,2,3,4,5} {
		\filldraw[black] (\c) circle (2pt);
	}

	\node[left] at (1) {$1$};
	\node[right] at (2) {$2$};
	\node[right] at (4) {$4$};
	\node[left] at (3) {$3$};
	\node[above right] at (5) {$5$};
	
	\draw (1) -- (2) -- (4) -- (3) -- cycle;
	\draw (3) -- (5) -- (4);
	\end{tikzpicture}
	\caption{House graph.}
	\label{fig:house}
\end{figure}
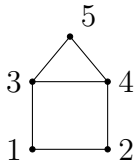

The incidence matrix and Laplaican matrix are
\medskip
\[
\tiny{
B = \begin{pmatrix}
-1 & -1 & 0 & 0 & 0 & 0 \\
1 & 0 & -1 & 0 & 0 & 0 \\
0 & 1 & 0 & -1 & -1 & 0 \\
0 & 0 & 1 & 1 & 0 & -1 \\
0 & 0 & 0 & 0 & 1 & 1 
\end{pmatrix}
,
\qquad\qquad
L = \begin{pmatrix}
2 & -1 & -1 & 0 & 0 \\
-1 & 2 & 0 & -1 & 0 \\
-1 & 0 & 3 & -1 & -1 \\
0 & -1 & -1 & 3 & -1 \\
0 & 0 & -1 & -1 & 2
\end{pmatrix} \, .
}
\]
\end{eg}

\section{Trees and forests} \label{sec:forests}

\subsection{Rooted spanning forests}
Given a graph $G$, a {\em spanning tree} is a subgraph which is connected, contains no cycles, and contains all vertices of $G$.
A {\em spanning forest} is a subgraph which contains no cycles and includes all vertices of $G$.
An {\em $r$-forest} is a spanning forest which has exactly $r$ connected components. 
We let $\forests_r(G)$ denote the set of $r$-forests 
and $\kappa_r(G)$ the number of $r$-forests.
We are primarily interested in $\kappa(G) \vcentcolon = \kappa_1(G)$ and $\kappa_2(G)$.

Given a vertex subset $S \subset V$,
an {\em $S$-rooted spanning forest} is a spanning forest which has exactly one vertex from $S$ in each connected component (so the number of components is equal to $|S|$).
If $S = \{p_1,\ldots,p_r\}$, we let $\kappa_r(p_1|p_2|\cdots|p_r)$ denote the number of $S$-rooted spanning forests.
Given $x,y,q \in V(G)$,
let $\kappa_2(xy|q)$ denote the number of two-forests which have $x$ and $y$ in one component and $q$ in the other component.
Note, in particular, that 
$\kappa_2(xy|q) = \kappa_2(yx|q)$
and $\kappa_2(xq|q) = 0$.

\begin{eg}
Suppose $G$ is the house graph (Figure~\ref{fig:house}). Then $\kappa(G)= 11$ and $\kappa_2(G) = 19$.
The two-forests of $G$ are shown in Figure~\ref{fig:house-forests}.
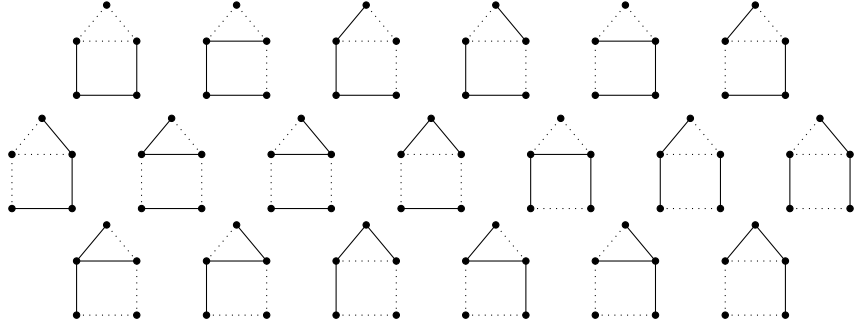
\begin{figure}[h]
\[
\begin{tikzpicture}[scale=0.4]
	\coordinate (1) at (0,0);
	\coordinate (2) at (2,0);
	\coordinate (3) at (0,1.8);
	\coordinate (4) at (2,1.8);
	\coordinate (5) at (1,3);
	\foreach \c in {1,2,3,4,5} {
		\filldraw[black] (\c) circle (3pt);
	}

	\draw (1) -- (2); 
	\draw (1) -- (3);
	\draw (2) -- (4);
	\draw[dotted] (3) -- (4);
	\draw[dotted] (3) -- (5);
	\draw[dotted] (4) -- (5);
\end{tikzpicture}
\qquad
\begin{tikzpicture}[scale=0.4]
	\coordinate (1) at (0,0);
	\coordinate (2) at (2,0);
	\coordinate (3) at (0,1.8);
	\coordinate (4) at (2,1.8);
	\coordinate (5) at (1,3);
	
	\foreach \c in {1,2,3,4,5} {
		\filldraw[black] (\c) circle (3pt);
	}

	\draw (1) -- (2); 
	\draw (1) -- (3);
	\draw[dotted] (2) -- (4);
	\draw (3) -- (4);
	\draw[dotted] (3) -- (5);
	\draw[dotted] (4) -- (5);
\end{tikzpicture}
\qquad
\begin{tikzpicture}[scale=0.4]
	\coordinate (1) at (0,0);
	\coordinate (2) at (2,0);
	\coordinate (3) at (0,1.8);
	\coordinate (4) at (2,1.8);
	\coordinate (5) at (1,3);
	
	\foreach \c in {1,2,3,4,5} {
		\filldraw[black] (\c) circle (3pt);
	}

	\draw (1) -- (2); 
	\draw (1) -- (3);
	\draw[dotted] (2) -- (4);
	\draw[dotted] (3) -- (4);
	\draw (3) -- (5);
	\draw[dotted] (4) -- (5);
\end{tikzpicture}
\qquad
\begin{tikzpicture}[scale=0.4]
	\coordinate (1) at (0,0);
	\coordinate (2) at (2,0);
	\coordinate (3) at (0,1.8);
	\coordinate (4) at (2,1.8);
	\coordinate (5) at (1,3);
	
	\foreach \c in {1,2,3,4,5} {
		\filldraw[black] (\c) circle (3pt);
	}

	\draw (1) -- (2); 
	\draw (1) -- (3);
	\draw[dotted] (2) -- (4);
	\draw[dotted] (3) -- (4);
	\draw[dotted] (3) -- (5);
	\draw (4) -- (5);
\end{tikzpicture}
\qquad
\begin{tikzpicture}[scale=0.4]
	\coordinate (1) at (0,0);
	\coordinate (2) at (2,0);
	\coordinate (3) at (0,1.8);
	\coordinate (4) at (2,1.8);
	\coordinate (5) at (1,3);
	
	\foreach \c in {1,2,3,4,5} {
		\filldraw[black] (\c) circle (3pt);
	}

	\draw (1) -- (2); 
	\draw[dotted] (1) -- (3);
	\draw (2) -- (4);
	\draw (3) -- (4);
	\draw[dotted] (3) -- (5);
	\draw[dotted] (4) -- (5);
\end{tikzpicture}
\qquad
\begin{tikzpicture}[scale=0.4]
	\coordinate (1) at (0,0);
	\coordinate (2) at (2,0);
	\coordinate (3) at (0,1.8);
	\coordinate (4) at (2,1.8);
	\coordinate (5) at (1,3);
	
	\foreach \c in {1,2,3,4,5} {
		\filldraw[black] (\c) circle (3pt);
	}

	\draw (1) -- (2); 
	\draw[dotted] (1) -- (3);
	\draw (2) -- (4);
	\draw[dotted] (3) -- (4);
	\draw (3) -- (5);
	\draw[dotted] (4) -- (5);
\end{tikzpicture}
\]
\[
\begin{tikzpicture}[scale=0.4]
	\coordinate (1) at (0,0);
	\coordinate (2) at (2,0);
	\coordinate (3) at (0,1.8);
	\coordinate (4) at (2,1.8);
	\coordinate (5) at (1,3);
	
	\foreach \c in {1,2,3,4,5} {
		\filldraw[black] (\c) circle (3pt);
	}

	\draw (1) -- (2); 
	\draw[dotted] (1) -- (3);
	\draw (2) -- (4);
	\draw[dotted] (3) -- (4);
	\draw[dotted] (3) -- (5);
	\draw (4) -- (5);
\end{tikzpicture}
\qquad
\begin{tikzpicture}[scale=0.4]
	\coordinate (1) at (0,0);
	\coordinate (2) at (2,0);
	\coordinate (3) at (0,1.8);
	\coordinate (4) at (2,1.8);
	\coordinate (5) at (1,3);
	
	\foreach \c in {1,2,3,4,5} {
		\filldraw[black] (\c) circle (3pt);
	}

	\draw (1) -- (2); 
	\draw[dotted] (1) -- (3);
	\draw[dotted] (2) -- (4);
	\draw (3) -- (4);
	\draw (3) -- (5);
	\draw[dotted] (4) -- (5);
\end{tikzpicture}
\qquad
\begin{tikzpicture}[scale=0.4]
	\coordinate (1) at (0,0);
	\coordinate (2) at (2,0);
	\coordinate (3) at (0,1.8);
	\coordinate (4) at (2,1.8);
	\coordinate (5) at (1,3);
	
	\foreach \c in {1,2,3,4,5} {
		\filldraw[black] (\c) circle (3pt);
	}

	\draw (1) -- (2); 
	\draw[dotted] (1) -- (3);
	\draw[dotted] (2) -- (4);
	\draw (3) -- (4);
	\draw[dotted] (3) -- (5);
	\draw (4) -- (5);
\end{tikzpicture}
\qquad
\begin{tikzpicture}[scale=0.4]
	\coordinate (1) at (0,0);
	\coordinate (2) at (2,0);
	\coordinate (3) at (0,1.8);
	\coordinate (4) at (2,1.8);
	\coordinate (5) at (1,3);
	
	\foreach \c in {1,2,3,4,5} {
		\filldraw[black] (\c) circle (3pt);
	}

	\draw (1) -- (2); 
	\draw[dotted] (1) -- (3);
	\draw[dotted] (2) -- (4);
	\draw[dotted] (3) -- (4);
	\draw (3) -- (5);
	\draw (4) -- (5);
\end{tikzpicture}
\qquad
\begin{tikzpicture}[scale=0.4]
	\coordinate (1) at (0,0);
	\coordinate (2) at (2,0);
	\coordinate (3) at (0,1.8);
	\coordinate (4) at (2,1.8);
	\coordinate (5) at (1,3);
	
	\foreach \c in {1,2,3,4,5} {
		\filldraw[black] (\c) circle (3pt);
	}

	\draw[dotted] (1) -- (2); 
	\draw (1) -- (3);
	\draw (2) -- (4);
	\draw (3) -- (4);
	\draw[dotted] (3) -- (5);
	\draw[dotted] (4) -- (5);
\end{tikzpicture}
\qquad
\begin{tikzpicture}[scale=0.4]
	\coordinate (1) at (0,0);
	\coordinate (2) at (2,0);
	\coordinate (3) at (0,1.8);
	\coordinate (4) at (2,1.8);
	\coordinate (5) at (1,3);
	
	\foreach \c in {1,2,3,4,5} {
		\filldraw[black] (\c) circle (3pt);
	}

	\draw[dotted] (1) -- (2); 
	\draw (1) -- (3);
	\draw (2) -- (4);
	\draw[dotted] (3) -- (4);
	\draw (3) -- (5);
	\draw[dotted] (4) -- (5);
\end{tikzpicture}
\qquad
\begin{tikzpicture}[scale=0.4]
	\coordinate (1) at (0,0);
	\coordinate (2) at (2,0);
	\coordinate (3) at (0,1.8);
	\coordinate (4) at (2,1.8);
	\coordinate (5) at (1,3);
	
	\foreach \c in {1,2,3,4,5} {
		\filldraw[black] (\c) circle (3pt);
	}

	\draw[dotted] (1) -- (2); 
	\draw (1) -- (3);
	\draw (2) -- (4);
	\draw[dotted] (3) -- (4);
	\draw[dotted] (3) -- (5);
	\draw (4) -- (5);
\end{tikzpicture}
\]
\[
\begin{tikzpicture}[scale=0.4]
	\coordinate (1) at (0,0);
	\coordinate (2) at (2,0);
	\coordinate (3) at (0,1.8);
	\coordinate (4) at (2,1.8);
	\coordinate (5) at (1,3);
	
	\foreach \c in {1,2,3,4,5} {
		\filldraw[black] (\c) circle (3pt);
	}

	\draw[dotted] (1) -- (2); 
	\draw (1) -- (3);
	\draw[dotted] (2) -- (4);
	\draw (3) -- (4);
	\draw (3) -- (5);
	\draw[dotted] (4) -- (5);
\end{tikzpicture}
\qquad
\begin{tikzpicture}[scale=0.4]
	\coordinate (1) at (0,0);
	\coordinate (2) at (2,0);
	\coordinate (3) at (0,1.8);
	\coordinate (4) at (2,1.8);
	\coordinate (5) at (1,3);
	
	\foreach \c in {1,2,3,4,5} {
		\filldraw[black] (\c) circle (3pt);
	}

	\draw[dotted] (1) -- (2); 
	\draw (1) -- (3);
	\draw[dotted] (2) -- (4);
	\draw (3) -- (4);
	\draw[dotted] (3) -- (5);
	\draw (4) -- (5);
\end{tikzpicture}
\qquad
\begin{tikzpicture}[scale=0.4]
	\coordinate (1) at (0,0);
	\coordinate (2) at (2,0);
	\coordinate (3) at (0,1.8);
	\coordinate (4) at (2,1.8);
	\coordinate (5) at (1,3);
	
	\foreach \c in {1,2,3,4,5} {
		\filldraw[black] (\c) circle (3pt);
	}

	\draw[dotted] (1) -- (2); 
	\draw (1) -- (3);
	\draw[dotted] (2) -- (4);
	\draw[dotted] (3) -- (4);
	\draw (3) -- (5);
	\draw (4) -- (5);
\end{tikzpicture}
\qquad
\begin{tikzpicture}[scale=0.4]
	\coordinate (1) at (0,0);
	\coordinate (2) at (2,0);
	\coordinate (3) at (0,1.8);
	\coordinate (4) at (2,1.8);
	\coordinate (5) at (1,3);
	
	\foreach \c in {1,2,3,4,5} {
		\filldraw[black] (\c) circle (3pt);
	}

	\draw[dotted] (1) -- (2); 
	\draw[dotted] (1) -- (3);
	\draw (2) -- (4);
	\draw (3) -- (4);
	\draw (3) -- (5);
	\draw[dotted] (4) -- (5);
\end{tikzpicture}
\qquad
\begin{tikzpicture}[scale=0.4]
	\coordinate (1) at (0,0);
	\coordinate (2) at (2,0);
	\coordinate (3) at (0,1.8);
	\coordinate (4) at (2,1.8);
	\coordinate (5) at (1,3);
	
	\foreach \c in {1,2,3,4,5} {
		\filldraw[black] (\c) circle (3pt);
	}

	\draw[dotted] (1) -- (2); 
	\draw[dotted] (1) -- (3);
	\draw (2) -- (4);
	\draw (3) -- (4);
	\draw[dotted] (3) -- (5);
	\draw (4) -- (5);
\end{tikzpicture}
\qquad
\begin{tikzpicture}[scale=0.4]
	\coordinate (1) at (0,0);
	\coordinate (2) at (2,0);
	\coordinate (3) at (0,1.8);
	\coordinate (4) at (2,1.8);
	\coordinate (5) at (1,3);
	
	\foreach \c in {1,2,3,4,5} {
		\filldraw[black] (\c) circle (3pt);
	}

	\draw[dotted] (1) -- (2); 
	\draw[dotted] (1) -- (3);
	\draw (2) -- (4);
	\draw[dotted] (3) -- (4);
	\draw (3) -- (5);
	\draw (4) -- (5);
\end{tikzpicture}
\]
\caption{Two-forests of the house graph.}
\label{fig:house-forests}
\end{figure}
\end{eg}

\begin{notation*}
Given a square matrix $M$,
let $M[\overline{i},\overline{j}]$ denote the matrix obtained from $M$ by deleting row $i$ and column $j$.
If $i = j$, we abbreviate 
$M[\overline{i}, \overline{i}]$
with 
$M[\overline{i}]$.
More generally, we let $M[\overline{I}, \overline{J}]$ (respectively $M[I,J]$) denote the matrix obtained from $M$ 
by deleting (respectively keeping) the $I$-indexed rows and the $J$-indexed columns.
We use $M[\overline{I}]$ as shorthand for $M[\overline{I}, \overline{I}]$,
and if $I = \{i,k\}$, we use 
$M[\overline{ik}]$
as shorthand for $M[\overline{I}]$.
\end{notation*}

As is well known, one can compute the number of spanning trees and, more generally, rooted spanning forests using the Laplacian matrix $L$.

\begin{thm}
\label{thm:matrix-tree}
Let $L$ denote the Laplacian matrix of a graph $G = (V,E)$.
\begin{enumerate}[(a)]
\item 
For any $q \in V$,
\begin{equation*}
\kappa(G) 
= \det L[\overline{q}] \, .
\end{equation*}

\item 
For any nonempty set of vertices $S = \{x_1, \ldots, x_r\} \subset V$,
\begin{equation*}
\kappa_r(x_1|x_2|\cdots|x_r) 
= \det L[\overline{S}] \, .
\end{equation*}

\item 
For any $x,y,q \in V$,
\begin{equation}
\kappa_2(xy|q) = \left| \det L[\overline{xq}, \overline{yq}] \right| \, .
\end{equation}
\end{enumerate}
\end{thm}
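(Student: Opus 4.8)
The unifying tool is the factorization $L = BB^\tr$ together with the Cauchy--Binet formula, and the combinatorial heart is a single unimodularity lemma. For a vertex subset $S \subset V$ and an edge subset $T \subset E$ with $|T| = |V| - |S|$, the matrix $B[\overline{S}, T]$ (delete the rows in $S$, keep the columns in $T$) is square, and I claim
\[
\det B[\overline{S}, T] = \pm 1 \ \text{ if $T$ is an $S$-rooted spanning forest, and } \ \det B[\overline{S}, T] = 0 \ \text{ otherwise.}
\]
I would prove this by induction on $|V \setminus S|$ via leaf removal: if $T$ is an $S$-rooted forest then every nontrivial component has a non-root leaf $v$, whose unique incident edge $e \in T$ gives a row with a single nonzero entry $\pm 1$; expanding along that row strips $v$ and $e$ and reduces to a smaller instance. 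For the vanishing case, a cycle in $T$ produces a column dependency (of $\pm 1$ coefficients) that survives row deletion, while a forest with the correct edge count but some component containing no root of $S$ forces a singular block (its rows sum to zero), and pigeonholing shows a root-free component always exists unless each component contains exactly one root.

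Parts (a) and (b) are then immediate and symmetric. Writing $L[\overline{S}] = B[\overline{S}, E]\, B[\overline{S}, E]^\tr$, the Cauchy--Binet formula gives
\[
\det L[\overline{S}] = \sum_{|T| = |V| - |S|} \bigl(\det B[\overline{S}, T]\bigr)^2 ,
\]
and by the lemma each $S$-rooted spanning forest contributes $1$ while every other $T$ contributes $0$. Taking $S = \{q\}$ yields (a), and the general $S$ yields (b).

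Part (c) is the only case where the minor is not symmetric, since we delete rows $\{x,q\}$ but columns $\{y,q\}$. Writing $L[\overline{xq}, \overline{yq}] = B[\overline{xq}, E]\, B[\overline{yq}, E]^\tr$ and applying Cauchy--Binet gives
\[
\det L[\overline{xq}, \overline{yq}] = \sum_{|T| = |V| - 2} \det B[\overline{xq}, T]\, \det B[\overline{yq}, T] .
\]
By the lemma a term survives only when $T$ is simultaneously an $\{x,q\}$-rooted and a $\{y,q\}$-rooted two-forest, i.e.\ a two-forest in which $q$ sits in one component and both $x$ and $y$ sit in the other. These are exactly the two-forests enumerated by $\kappa_2(xy|q)$, and each surviving term is a product of two values in $\{\pm 1\}$, hence $\pm 1$.

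The sign analysis is the main obstacle and the whole reason for the absolute value. For a surviving $T$ with components $C_1 \supseteq \{x,y\}$ and $C_2 \ni q$, the edges of $C_2$ touch no vertex of $C_1$ and vice versa, so after reordering into the two blocks both $B[\overline{xq}, T]$ and $B[\overline{yq}, T]$ become block diagonal with an \emph{identical} $C_2$-block (the reduced incidence matrix of the tree $C_2$ rooted at $q$) and $C_1$-blocks that are the reduced incidence matrices of the tree $C_1$ rooted at $x$ and at $y$ respectively. I would show that the product of the two $C_1$-block determinants, corrected by the signs of the row/column reordering permutations, depends only on the positions of $x,y,q$ in the fixed vertex ordering and not on the internal structure of $T$; granting this, every surviving term carries a common sign $\varepsilon = \varepsilon(x,y,q) \in \{\pm 1\}$, so $\det L[\overline{xq}, \overline{yq}] = \varepsilon\, \kappa_2(xy|q)$ and taking absolute values finishes the proof. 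An alternative that bypasses the permutation bookkeeping is to use the cofactor identity $\det L[\overline{xq}, \overline{yq}] = \pm\, \kappa(G)\,\bigl(L[\overline{q}]^{-1}\bigr)_{y,x}$ together with the all-minors matrix-tree theorem, but the block-decomposition argument is the more self-contained route.
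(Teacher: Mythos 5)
The paper does not actually prove this theorem: its ``proof'' is a citation to Tutte (Section VI.6) and to Kirchhoff. So your argument is necessarily a different route, and it is the right one --- the standard self-contained proof via $L = BB^\tr$, Cauchy--Binet, and the unimodularity lemma for reduced incidence matrices (in effect, the all-minors matrix-tree theorem specialized to these three minors). Your treatment of parts (a) and (b) is complete and correct: the leaf-removal induction, the cycle-gives-a-column-dependency case, and the pigeonhole argument producing a root-free component (whose rows sum to zero) are exactly the needed ingredients. What this buys over the paper's citation is a unified derivation of all three parts from a single lemma, which is pedagogically preferable given that the paper later uses (c) and the related signed cofactor identities in Proposition 3.2.

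The one genuine soft spot is in part (c): the sign-constancy claim is asserted (``I would show\dots'') rather than proved, and it is the crux, not a cosmetic detail --- without it Cauchy--Binet only yields $\left|\det L[\overline{xq},\overline{yq}]\right| \leq \kappa_2(xy|q)$, since the surviving terms are $\pm 1$ and could a priori cancel. The claim is true, and your block-decomposition strategy does close it; here is one clean way to finish. Each surviving term equals $\det B[\overline{xq},T]\,\det B[\overline{yq},T] = \det\bigl(B[\overline{xq},T]\,B[\overline{yq},T]^\tr\bigr) = \det L_T[\overline{xq},\overline{yq}]$, where $L_T$ is the Laplacian of the two-forest $T$ itself. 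Since $L_T$ vanishes on pairs of vertices in different components, this determinant factors, up to row and column sorting signs (each an inversion count between the vertex sets of $C_1$ and $C_2$ in the global ordering), into $\det L_{C_2}[\overline{q}] \cdot \det L_{C_1}[\overline{x},\overline{y}]$. The first factor is $\kappa(C_2)=1$. For the second, use that the adjugate of the Laplacian of a connected graph equals $\kappa \cdot \bone\bone^\tr$ (its columns lie in $\ker L_{C_1} = \zspan(\bone)$, its rows likewise, and one diagonal entry is $\det L_{C_1}[\overline{x}] = \kappa(C_1) = 1$); hence $\det L_{C_1}[\overline{x},\overline{y}] = (-1)^{p_x + p_y}$ where $p_x, p_y$ are the positions of $x,y$ within $C_1$. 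Combining $p_x + p_y$ with the two inversion counts, the dependence on the partition $V = C_1 \sqcup C_2$ cancels modulo $2$, and every surviving term equals a sign determined only by the global positions of $x$, $y$, $q$ --- independent of $T$, as you claimed. With that bookkeeping (or, alternatively, the Cramer's-rule identification of $\det L[\overline{xq},\overline{yq}]$ with a cofactor of $L[\overline{q}]$), your proof is complete.
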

\begin{proof}
See Tutte~\cite[Section VI.6]{tutte}. Note that part (a) is Kirchhoff's celebrated matrix tree theorem~\cite{Kirchhoff1847}. 
\end{proof}

\begin{prop}
\label{prop:3-forest}
Let $G$ be a connected graph. Let $x,y,q\in V(G)$. 
Then 
\[
	\kappa(G) \kappa_3(x|y|q) = \kappa_2(x|q) \kappa_2(y|q) - \kappa_2(xy|q)^2 \, .
\]
\end{prop}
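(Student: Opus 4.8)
The plan is to reduce the claimed identity to a single determinantal identity---the Desnanot--Jacobi (``Dodgson condensation'') identity---applied to the reduced Laplacian, and then to invoke the symmetry of $L$ to convert a product of two complementary minors into a square.

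First I would rewrite every term using the matrix--tree theorem (Theorem~\ref{thm:matrix-tree}). Set $M \vcentcolon= L[\overline{q}]$, the reduced Laplacian, whose rows and columns are indexed by $V \setminus \{q\}$. Deleting the row and column indexed by $x$ from $L[\overline{q}]$ is the same as deleting $\{x,q\}$ from $L$, so Theorem~\ref{thm:matrix-tree} yields $\kappa(G) = \det M$, together with $\kappa_2(x|q) = \det L[\overline{xq}] = \det M[\overline{x}]$, $\kappa_2(y|q) = \det M[\overline{y}]$, and $\kappa_3(x|y|q) = \det L[\overline{xyq}] = \det M[\overline{xy}]$. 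Part~(c) gives $\kappa_2(xy|q) = \bigl| \det L[\overline{xq},\overline{yq}] \bigr| = \bigl| \det M[\overline{x},\overline{y}] \bigr|$, where $M[\overline{x},\overline{y}]$ now denotes deletion of only row $x$ and column $y$. Thus the statement becomes
\[
  \det M \cdot \det M[\overline{xy}] = \det M[\overline{x}]\,\det M[\overline{y}] - \bigl(\det M[\overline{x},\overline{y}]\bigr)^2 .
\]

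Second, I would recognize the right-hand side as Desnanot--Jacobi. Applied to the square matrix $M$ with the two deleted indices $x$ and $y$, that identity reads
\[
  \det M \cdot \det M[\overline{xy}] = \det M[\overline{x}]\,\det M[\overline{y}] - \det M[\overline{x},\overline{y}] \cdot \det M[\overline{y},\overline{x}] .
\]
In this symmetric form---deleting the \emph{same} index set $\{x,y\}$ from the rows as from the columns---no auxiliary signs appear, which one can cite (e.g.\ as a special case of the Jacobi/adjugate minor identities) or verify directly. To finish, I use that $L$, hence $M$, is symmetric: $M[\overline{y},\overline{x}] = \bigl(M[\overline{x},\overline{y}]\bigr)^{\tr}$, so $\det M[\overline{y},\overline{x}] = \det M[\overline{x},\overline{y}]$ and the cross term collapses to $\bigl(\det M[\overline{x},\overline{y}]\bigr)^2 = \kappa_2(xy|q)^2$, which matches the statement and absorbs the absolute value in part~(c).

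The obstacle here is not depth but bookkeeping and signs. The two points to get right are (i) matching each forest count to the correct minor of $M$---in particular that a minor of the reduced Laplacian $L[\overline{q}]$ is itself a minor of $L$ with the extra index $q$ removed---and (ii) justifying the sign-free symmetric form of Desnanot--Jacobi, which is exactly where a careless application could introduce a spurious minus sign; the key observation neutralizing this is that the $(x,y)$ and $(y,x)$ complementary minors are transposes of one another. The degenerate cases ($x=y$, or $x=q$ or $y=q$) can be checked to make both sides vanish, so one may assume $x,y,q$ are distinct.
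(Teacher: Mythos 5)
Your proposal is correct and follows essentially the same route as the paper's own proof: express $\kappa(G)$, $\kappa_2(x|q)$, $\kappa_2(y|q)$, $\kappa_3(x|y|q)$, and $\kappa_2(xy|q)$ as minors of the reduced Laplacian $M = L[\overline{q}]$ via Theorem~\ref{thm:matrix-tree}, then apply the Desnanot--Jacobi identity. Your explicit treatment of the sign issue (using the symmetry $M[\overline{y},\overline{x}] = (M[\overline{x},\overline{y}])^{\tr}$ to turn the cross term into a square) and of the degenerate cases matches what the paper handles implicitly with its ``$\pm\kappa_2(xy|q)$'' and its opening reduction to distinct $x,y,q$.
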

\begin{proof}
We may assume $x,y,q$ are pairwise distinct (otherwise both sides of the identity vanish). Let $M = L[\overline q]$. 
By Theorem~\ref{thm:matrix-tree} we have
\[
	\kappa(G) = \det M \quad , \quad
	\kappa_2(x|q) = \det M[\overline x] \quad , \quad
	\kappa_2(y|q) = \det M[\overline y] \, , 
\]
\[
	\kappa_3(x|y|q) = \det M[\overline{xy}] \quad ,
	\quad
	\pm \kappa_2(xy|q) = \det M[\overline{x},\overline{y}]
	 = \det M[\overline y, \overline{x}] \, .
\]

The identity now follows from the {\em Desnanot--Jacobi identity} (a special case of {\em Sylvester's determinant identity} -- see \cite{sylvester1851xxxvii}): for any square matrix $M$ and any indices $i\neq j$,
\[
	(\det M) (\det M[\overline{ij}]) = (\det M[\overline{i}]) (\det M[\overline{j}]) - (\det M[\overline{i},\overline{j}]) (\det M[\overline{j},\overline{i}]) \, .
\]
\end{proof}

The following is due to Liu and Chow \cite{liu-chow} (see also Myrvold \cite{myrvold}). 
We include a short proof here for completeness.
\begin{prop}[see \cite{liu-chow}]
\label{prop:forest-count}
Suppose $G = (V,E)$ is a connected graph and fix a vertex $q \in V$. Then
\begin{equation}
	\kappa_2(G) = \sum_{v \in V} \kappa_2(v|q) - \sum_{e \in E} \kappa_3(e^+| e^-| q) \, .
\end{equation}
\end{prop}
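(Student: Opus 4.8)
The plan is to prove the identity by a double-counting argument over two-forests that keeps track of the sizes of the two components. Fix the root $q$, and for a two-forest $F \in \forests_2(G)$ write $A_F$ for the component \emph{not} containing $q$. I would re-express each of the two sums on the right-hand side as a weighted count of two-forests, weighted by $|A_F|$ and by $|A_F| - 1$ respectively; the difference then telescopes to $\kappa_2(G) = \sum_F 1$.

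First I would rewrite the vertex sum. By definition $\kappa_2(v|q)$ counts two-forests in which $v$ and $q$ lie in different components, so for a fixed $F \in \forests_2(G)$ the vertices $v$ for which $F$ is counted in $\kappa_2(v|q)$ are exactly the vertices of $A_F$. Hence
\[
	\sum_{v\in V} \kappa_2(v|q) = \sum_{F \in \forests_2(G)} |A_F| \, .
\]

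Next I would establish
\[
	\sum_{e\in E} \kappa_3(e^+|e^-|q) = \sum_{F\in\forests_2(G)} \big( |A_F| - 1 \big) \, .
\]
Since $A_F$ is a tree on $|A_F|$ vertices, the right-hand side equals the number of pairs $(F, e)$ where $F$ is a two-forest and $e$ is an edge of $F$ contained in $A_F$. The key step is a bijection between such pairs and pairs $(e, F')$ where $e \in E$ and $F'$ is a three-forest counted by $\kappa_3(e^+|e^-|q)$: given $(F, e)$, delete $e$ to split $A_F$ into two subtrees, producing a three-forest whose three components contain $e^+$, $e^-$, and $q$ separately; conversely, given $(e, F')$, add $e$ to merge the components of $e^+$ and $e^-$, producing a two-forest in which $e$ is an edge inside the non-$q$ component. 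Summing over $F$ (or over $e$) then yields the displayed equality.

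Combining the two displays gives
\[
	\sum_{v\in V}\kappa_2(v|q) - \sum_{e\in E}\kappa_3(e^+|e^-|q) = \sum_{F\in\forests_2(G)}\big(|A_F| - (|A_F|-1)\big) = \kappa_2(G) \, .
\]
I expect the main obstacle to be verifying that the edge-deletion and edge-addition maps are genuine mutually inverse bijections and respect the rooting: specifically, that deleting a tree edge always separates its two endpoints into distinct components while leaving $q$ in a third, and that adding the edge back never creates a cycle nor disturbs the $q$-component. Confirming that degenerate cases contribute zero on both sides is routine but worth recording: when $e$ is incident to $q$ no pair $(F,e)$ exists and $\kappa_3(e^+|e^-|q) = 0$, and a singleton component $A_F$ simply contributes $|A_F| - 1 = 0$ to the edge sum.
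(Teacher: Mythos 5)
Your proof is correct and follows essentially the same double-counting argument as the paper: the first sum counts each two-forest $F$ with multiplicity $|A_F|$, and the second counts it with multiplicity $|A_F|-1$ via the delete-an-edge/add-an-edge correspondence. The only difference is presentational — you spell out the bijection between pairs $(F,e)$ and pairs $(e,F')$ that the paper's proof states implicitly.
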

\begin{proof}
Suppose $F$ is a two-forest whose connected components induce a vertex partition $V = A \sqcup B$, such that $q \in B$.
Then 
\begin{itemize} 
\item 
the first sum 
$\displaystyle \sum_{v \in V} \kappa_2(v|q)$
counts $F$ with multiplicity $|A|$, and
\item 
the second sum
$\displaystyle \sum_{e \in E} \kappa_3(e^+| e^-| q)$
counts $F \setminus e$ once for each edge $e$ in the $A$-component of $F$,
hence counts $F$ with multiplicity $(|A|-1)$.
\end{itemize}
Therefore, the right-hand side counts each two-forest $F$ with multiplicity one, yielding $\kappa_2(G)$.
\end{proof}

\subsection{Cut sets of two-forests}

Given a two-forest $F$, suppose the components of $F$ induce a partition $V = A \sqcup B$ on the vertex set .
The {\em cut set} $\cut F$ of this two-forest is the subset of edges which connect $A$ and $B$, i.e. 
\begin{equation}
	\cut F = \{ e \in E : e = \{u , v\} \text{ for } u \in A \text{ and } v \in B \}\, .
\end{equation}
The {\em cut size} of $F$ refers to the cardinality $|\cut F|$.
If $F$ is a {\em uniformly random two-forest}, then the expected value of $|\cut F|$ is  
\begin{equation}
\label{eq:random-forest-cut}
	\mathbb E( |\cut F|) = \frac{\sum_{F \in \mathcal F_2(G)} |\cut F|} {\kappa_2(G)} \, .
\end{equation}

\begin{eg}
Suppose $G$ is the house graph (Figure~\ref{fig:house}).
A two-forest $F$ in $G$ can have cut size $|\cut F|=2$ or $3$.
An example of each case is shown in Figure~\ref{fig:house-cut}.
The edges in $F$ are solid, and the edges in $\cut F$ are marked with an empty circle.
\begin{figure}[h]
\begin{minipage}{0.45\textwidth}
\centering
\begin{tikzpicture}[scale=0.4]
	\coordinate (1) at (0,0);
	\coordinate (2) at (2,0);
	\coordinate (3) at (0,1.8);
	\coordinate (4) at (2,1.8);
	\coordinate (5) at (1,3);
	
	\foreach \c in {1,2,3,4,5} {
		\filldraw[black] (\c) circle (3pt);
	}

	\draw (1) -- (2); 
	\draw[dotted] (1) -- node{$\circ$} (3);
	\draw[dotted] (2) -- node{$\circ$} (4);
	\draw[dotted] (3) -- (4);
	\draw (3) -- (5);
	\draw (4) -- (5);
\end{tikzpicture}

\end{minipage}
\begin{minipage}{0.45\textwidth}
\centering
\begin{tikzpicture}[scale=0.4]
	\coordinate (1) at (0,0);
	\coordinate (2) at (2,0);
	\coordinate (3) at (0,1.8);
	\coordinate (4) at (2,1.8);
	\coordinate (5) at (1,3);
	
	\foreach \c in {1,2,3,4,5} {
		\filldraw[black] (\c) circle (3pt);
	}

	\draw (1) -- (2); 
	\draw[dotted] (1) -- node{$\circ$} (3);
	\draw (2) -- (4);
	\draw[dotted] (3) -- node{$\circ$} (4);
	\draw (3) -- (5);
	\draw[dotted] (4) -- node{$\circ$} (5);
\end{tikzpicture}

\end{minipage}
\caption{Two-forests of cut size two (left) and three (right).}
\label{fig:house-cut}
\end{figure}
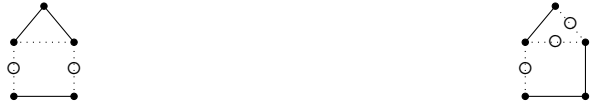

\noindent From Figure~\ref{fig:house-forests}, one can see that there are $13$ two-forests with $|\cut F| = 2$ and $6$ two-forests with $|\cut F| = 3$. 
The expected cut size is
\[
  \EE(|\cut F|) = \frac{{13 \times 2 + 6 \times 3}}{19} = \frac{44}{19}.
\]
\end{eg}

The following result of Kassel, Kenyon, and Wu relates the counts of trees and two-forests to the average cut size of a two-forest. 
We include a short proof here for completeness.
\begin{lem}[see \cite{kassel-kenyon-wu}]
\label{lem:average-cut}
Let $G=(V, E)$ be a connected graph, and let $F$ be a uniformly random two-forest in $G$.
Then
\[
	\kappa_2(G) \mathbb E(|\cut F|) 
	= \kappa(G)\left( |V| - 1\right) \, .
\]
\end{lem}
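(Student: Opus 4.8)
The plan is to prove the equivalent statement
\[
\sum_{F \in \forests_2(G)} |\cut F| = \kappa(G)\left(|V| - 1\right),
\]
since by \eqref{eq:random-forest-cut} the left-hand side is exactly $\kappa_2(G)\,\EE(|\cut F|)$. The natural approach is double counting: I would count the set of pairs $(F, e)$ with $F$ a two-forest and $e \in \cut F$ in two different ways.

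First, writing $|\cut F| = \sum_{e \in E} \one[e \in \cut F]$ and interchanging the order of summation gives
\[
\sum_{F \in \forests_2(G)} |\cut F| = \sum_{e \in E} \#\{ F \in \forests_2(G) : e \in \cut F \},
\]
so it suffices to understand, for each edge $e$, how many two-forests place $e$ in their cut set.

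The key step is a bijection. For a fixed edge $e = \{u,v\}$, the edge lies in $\cut F$ exactly when its endpoints lie in the two different components of $F$ (so in particular $e \notin F$). I would observe that adjoining $e$ to such an $F$ produces a connected, acyclic spanning subgraph with $|V|-1$ edges, i.e.\ a spanning tree $T = F \cup \{e\}$ containing $e$; conversely, deleting any edge $e$ from a spanning tree $T$ splits $T$ into exactly two components across which $e$ runs, producing a two-forest $F = T \setminus \{e\}$ with $e \in \cut F$. This sets up a bijection between pairs $(F,e)$ with $F$ a two-forest and $e \in \cut F$, and pairs $(T,e)$ with $T$ a spanning tree and $e \in T$.

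Finally, since every spanning tree has exactly $|V|-1$ edges, the number of pairs $(T,e)$ with $e \in T$ equals $\kappa(G)(|V|-1)$, which yields the claimed identity. I do not anticipate a serious obstacle; the only point requiring care is verifying that the edge-addition and edge-deletion maps are mutually inverse and genuinely land in the claimed sets—that adjoining a crossing edge to a two-forest creates no cycle, and that deleting a tree edge yields precisely two components. Both follow from the standard edge counts (a two-forest has $|V|-2$ edges and a spanning tree has $|V|-1$) together with the fact that removing an edge from a tree always disconnects it into exactly two pieces.
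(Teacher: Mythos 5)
Your proof is correct and follows essentially the same approach as the paper: the identity is established via the bijection $(F, e) \mapsto F \cup \{e\}$ between pairs consisting of a two-forest with an edge of its cut set and pairs consisting of a spanning tree with one of its $|V|-1$ edges. Your version simply spells out the double-counting and the verification that the maps are mutually inverse, which the paper leaves implicit.
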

\begin{proof}
This identity reflects the natural bijection between the two sets
\[
	\{(F, f) \colon F \in \forests_2(G),\, f \in \cut F \} \quad \text{and} \quad 
	\{(T, e) \colon T \in \forests_1(G),\, e \in T \}\, ,
\]
given by adding $f$ to $F$ and by removing $e$ from $T$.
\end{proof}

\section{Potential theory on graphs} 
\label{sec:potential}

\subsection{Resistance and potential kernel}
\label{sec:resistance}

Given a graph $G = (V,E)$, we may consider $G$ as an electrical network where each edge is a wire containing a resistor of unit resistance.
Let $r(x,y)$ denote the {\em effective resistance} between $x,y \in V$.
This resistance can be expressed in terms of the number of spanning trees and rooted spanning forests:
\begin{equation}
\label{eq:resistance-trees}
	r(x,y) = \frac{\kappa_2(x|y)}{\kappa(G)}
	\qquad\text{for any }x,y\in V.
\end{equation}
This well-known identity is due to Kirchhoff~\cite{Kirchhoff1847}; for a more modern treatment see, e.g., Biggs~\cite[Section 17]{biggs}.

More generally, let $j_q: V \times V \to \RR$ denote the unit {\em potential function}, or {\em potential kernel}, with respect to $q \in V$. This $j$-function is defined by: 
\begin{itemize}
\item 
$j_q(q, \cdot) = j_q(\cdot, q) = 0$,
and 
\item 
for $x \neq q$ and $y \neq q$ :
\begin{equation}
\label{eq:j-function-def}
	j_q(x,y) \;\text{ is the $(x,y)$-entry of the matrix inverse of } L[\overline{q}].
\end{equation}
\end{itemize}
Since $L[\overline{q}]$ is symmetric, we have $j_q(x, y) = j_q(y, x)$.

The values of the $j$-function can also be expressed in terms of the number of spanning trees and rooted spanning forests (see, e.g., \cite[Proposition 13.1]{biggs}):
\begin{equation}
\label{eq:potential-trees}
j_q(x,y)
= \frac{\kappa_2(xy | q)}{\kappa(G)}
\qquad\text{for any } x,y,q\in V.
\end{equation}
It follows from \eqref{eq:resistance-trees} and \eqref{eq:potential-trees} that $r(x,q) = j_q(x,x)$.

\begin{rmk}
The physical meaning of the $j$-function is as follows: 
the value $j_q(x, y)$ gives the electric potential at $x$ if one unit of current enters the network at $y$ and exits at $q$, with $q$ ``grounded'' (i.e., has zero potential).

\ctikzset{bipoles/length=.7cm}
\begin{figure}[h!]
\begin{tikzpicture}
\node [cloud, draw,cloud puffs=10.2,cloud puff arc=120, aspect=2, inner ysep=1em, gray] (cloud) at (0, 0) {};
	\fill[black] (0,-.5) circle (.1);
	\fill[black] (.7,.3) circle (.1);
	\fill[black] (-.8,0) circle (.1);
	\draw (0,-.5) to (0,-1) node[ground]{};
	\draw (0,-.9) to (1.8,-.9) to[dcisource] (1.8, .3) to (.7,.3);
	\draw[{Circle[open]}-] (-1.8,-.9) to (0,-.9);
	\draw[{Circle[open]}-] (-1.8,0) to (-.8,0);
	\fill[black] (0,-.9) circle (.05);
	\node at (-2, -.45) {\tiny{$j_q(x,y)$}};
	\node at (-2, -.9) {$-$};
	\node at (-2, 0) {$+$};
	\node at (0, -.2) {\tiny{$q$}};
	\node at (.7, 0) {\tiny{$y$}};
	\node at (-.8, .3) {\tiny{$x$}};
	\node at (2.2, -.3) {\tiny{$1$}};
\end{tikzpicture}
\caption{Electrical network interpretation of the $j$-function.}\label{fig:jfunction}
\end{figure}
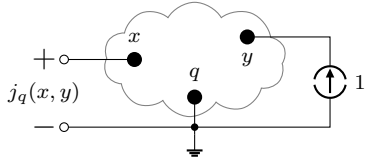

\end{rmk}

We recall some basic relations between the effective resistance and $j$-functions, which will be useful later.
\begin{prop}
\label{prop:res-identities}
Suppose $x,y,q$ are vertices of a connected graph $G$. 
\hfill
\begin{enumerate}[(a)]
\item 
$r(x,y) = j_x(y,q) + j_y(x,q)$.

\item 
$r(x,q) + r(y,q) = r(x,y) + 2 j_q(x,y)$.

\item 
$r(x,q) - r(y,q) = 2 j_x(y, q) - r(x, y)$.

\item 
$\displaystyle
	\sum_{y \in \Nvert(x)} j_x(q, y) = \begin{cases}
	1 &\text{if } x \neq q, \\
	0 &\text{if } x = q.
	\end{cases}
$

\item 
$\displaystyle (r(x,q) - r(y,q))^2= r(x,y)^2 + 4r(x,y) j_q(x,y) + 4j_q(x,y)^2 - 4r(x,q) r(y,q)$.

\end{enumerate}
\end{prop}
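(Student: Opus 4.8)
The plan is to treat part~(b) as the master identity, deduce (a), (c), and (e) from it by formal manipulation, and prove (d) independently from the defining equation of the potential kernel. So the real content is concentrated in (b); everything else is bookkeeping.

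First I would establish (b), i.e. $2\,j_q(x,y) = r(x,q) + r(y,q) - r(x,y)$. The quickest route uses the characterization of the potential kernel as the solution of a linear system: for $s \neq q$, the function $v_s := j_q(\cdot,s)$ on $V$ is the unique function with $v_s(q) = 0$ and $L v_s = \one_s - \one_q$. This follows by unwinding \eqref{eq:j-function-def}: on $V \setminus \{q\}$ the equation $L[\overline q]\,L[\overline q]^{-1} = I$ gives $(L v_s)_w = (\one_s)_w$ for $w\neq q$, and since $L$ has zero column sums the remaining coordinate is forced to be $(L v_s)_q = -1 = (\one_s - \one_q)_q$. Superposing, $u := j_q(\cdot,x) - j_q(\cdot,y)$ satisfies $L u = \one_x - \one_y$ with $u(q) = 0$, so $u$ is a potential realizing unit current from $x$ to $y$, and the effective resistance is the potential drop $r(x,y) = u(x) - u(y)$. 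Expanding and using $j_q(x,y) = j_q(y,x)$ yields $r(x,y) = j_q(x,x) + j_q(y,y) - 2 j_q(x,y) = r(x,q) + r(y,q) - 2 j_q(x,y)$, which is (b). (A purely combinatorial alternative is available: classify each two-forest by how $x,y,q$ split between its two components, writing $A,B,C$ for the numbers that separate $x$, $y$, $q$ respectively from the other two, so that $\kappa_2(x|q)=A+C$, $\kappa_2(y|q)=B+C$, $\kappa_2(x|y)=A+B$, $\kappa_2(xy|q)=C$; then (b) is the identity $(A+C)+(B+C) = (A+B) + 2C$ after dividing by $\kappa(G)$ and invoking \eqref{eq:resistance-trees} and \eqref{eq:potential-trees}.)

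With (b) in hand the remaining parts are immediate. For (a) I would apply (b) once with ground vertex $x$ on points $y,q$ and once with ground $y$ on points $x,q$, giving $2 j_x(y,q) = r(x,y) + r(x,q) - r(y,q)$ and $2 j_y(x,q) = r(x,y) + r(y,q) - r(x,q)$; adding these cancels the $\pm(r(x,q)-r(y,q))$ terms and leaves $j_x(y,q) + j_y(x,q) = r(x,y)$. The first of those two displays, rearranged, is precisely (c). For (e) I would complete the square: $r(x,y)^2 + 4 r(x,y) j_q(x,y) + 4 j_q(x,y)^2 = (r(x,y) + 2 j_q(x,y))^2 = (r(x,q) + r(y,q))^2$ by (b), and subtracting $4\,r(x,q)\,r(y,q)$ converts $(r(x,q)+r(y,q))^2$ into $(r(x,q)-r(y,q))^2$.

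Finally, (d) is a local computation. Taking $v = j_x(\cdot,q)$, which satisfies $v(x) = 0$ and $L v = \one_q - \one_x$, and reading off the $x$-th coordinate gives $(Lv)_x = \deg(x)\,v(x) - \sum_{y\in\Nvert(x)} v(y) = -\sum_{y\in\Nvert(x)} j_x(q,y)$, whereas $(Lv)_x = (\one_q - \one_x)_x = -1$ when $x \neq q$, producing the value $1$; when $x = q$ every summand vanishes since $j_q(q,\cdot) = 0$. The one step I would be most careful about is (b) itself, together with the degenerate cases where two or three of $x,y,q$ coincide: there several forest counts vanish, and I would check the identities directly rather than running the generic argument.
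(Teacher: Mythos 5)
Your proof is correct, but it takes a genuinely different route from the paper's, so it is worth comparing. The paper proves (a) and (b) independently and purely combinatorially, by splitting two-forest counts according to which component contains the third vertex: for (a), $\kappa_2(x|y) = \kappa_2(xq|y) + \kappa_2(x|yq)$, and for (b), $\kappa_2(x|q)+\kappa_2(y|q) = \kappa_2(x|y) + 2\kappa_2(xy|q)$, then divides by $\kappa(G)$ and invokes \eqref{eq:resistance-trees} and \eqref{eq:potential-trees}; your parenthetical $A,B,C$ argument is exactly this proof in different notation. Your primary route to (b) is instead linear-algebraic: you characterize $j_q(\cdot,s)$ as the solution of $Lv = \one_s - \one_q$ normalized by $v(q)=0$, superpose, and read off $r(x,y)$ as a potential drop. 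That is valid, but note it quietly uses the classical equivalence between the electrical definition of $r$ (potential difference under unit current) and the forest-count formula \eqref{eq:resistance-trees}; the paper treats that equivalence as known background (Kirchhoff, Biggs), so nothing is lost, but it is an ingredient the combinatorial proof does not need. The downstream logic differs only mildly: the paper derives (c) from (b) by swapping $x$ and $q$ (the same manipulation you perform), proves (e) exactly as you do by completing the square via (b), and proves (d) by reducing, via (a), to the defining property \eqref{eq:j-function-def} of $L[\overline{q}]^{-1}$ --- your direct computation of the $x$-th coordinate of $L\, j_x(\cdot,q)$ is an equivalent unwinding of the same equation, and the paper also records a bijective alternative. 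What your organization buys is economy (one master identity, everything else formal) and transparency of the electrical meaning of (b) as a Gromov product; what the paper's buys is that degenerate cases never need separate attention, since the forest-count identities hold verbatim when vertices coincide (e.g. $\kappa_2(xq|q)=0$), whereas your generic argument for (b) requires $s\neq q$, and you rightly flag that the coincidence cases must be checked by hand.
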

\begin{proof}
We make use of identities \eqref{eq:resistance-trees} and \eqref{eq:potential-trees}.
\begin{enumerate}[(a)]
\item 
For any $x,y,q \in V$, we have
\begin{equation}
\kappa_2(x|y) = \kappa_2(xq|y) + \kappa_2(x|yq)
\end{equation}
since a two-forest counted by $\kappa_2(x|y)$ must have $q$ in either the $x$-component or the $y$-component.
Then divide both sides by $\kappa(G)$.

\item 
For any $x,y,q \in V$, we have
\begin{align}
\kappa_2(x|q) + \kappa_2(y|q)
&= \kappa_2(x|yq) + \kappa_2(xy|q)
+ \kappa_2(y|xq) + \kappa_2(xy|q) \\
&= (\kappa_2(x|yq) + \kappa_2(y|xq)) + 2\kappa_2(xy|q) \\
&= \kappa_2(x|y) + 2\kappa_2(xy|q) \, .
\end{align}
Then divide both sides by $\kappa(G)$.

\item 
From (b), switch variables $x$ and $q$, and rearrange.

\item 
Since $j_x(q, y) = r(x, q) - j_q(x, y)$ by part (a), and $r(x, q) = j_q(x, x)$, 
it suffices to show that
\begin{equation}
	\sum_{y \in \Nvert(x)} (j_q(x, x) - j_q(x, y)) 
		= \begin{cases}
	1 &\text{if } x \neq q, \\
	0 &\text{if } x = q.
	\end{cases}
\end{equation}
This identity is in fact the defining property \eqref{eq:j-function-def} of the $j$-function, since the left-hand sum is
$\displaystyle 
\sum_{y \in V \setminus \{q\}} L_{x, y} \, j_q(x, y)
$
in terms of the Laplacian $L$.

Alternatively, part (d) follows from \eqref{eq:potential-trees} and the combinatorial observation that if $x \neq q$,
\[
	\sum_{y \in \Nvert(x)} \kappa_2(x | yq) = \kappa(G)
\]
via the bijection that adds the edge $\{x, y\}$ to a forest counted by $\kappa_2(x | yq)$ when $y$ is incident to $x$
(recall $\kappa_2(q | yq) = 0$ when $x=q$).

\item 
Rather than use part (c), we use part (b) to arrive at an expression that is evidently symmetric in $x$ and $y$.
We have
\begin{align}
	(r(x,q) - r(y,q))^2 &= (r(x,q) + r(y,q))^2 - 4 r(x,q) r(y,q) \\
	&= (r(x,y) + 2 j_q(x,y))^2 - 4 r(x,q) r(y,q) \, ,
\end{align}
where the second equality uses part (b).
Then expand to the desired identity.
\qedhere
\end{enumerate}
\end{proof}

\begin{rmk}In the language used for hyperbolic metric spaces,
part (b) of the above proposition says that $j_q(x,y)$ is equal to the {\em Gromov product} of $x$ and $y$ at $q$, with respect to the effective resistance metric.
Similar relations and their consequences are explored further in~\cite{dejong-shokrieh}.
\end{rmk}

The following identity on effective resistances is due to Foster~\cite{foster}.

\begin{thm}[Foster's Identity]
\label{thm:foster}
For a connected graph $G = (V,E)$,
$$\sum_{e\in E}  r(e^+, e^-) = |V| - 1 \, .$$
\end{thm}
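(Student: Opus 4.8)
The plan is to prove Foster's identity by combining the resistance--tree formula \eqref{eq:resistance-trees} with a single exchange of summation order, avoiding any explicit manipulation of the Laplacian's pseudoinverse. The starting observation is that for an \emph{edge} $e \in E$, the summand $r(e^+,e^-)$ has a transparent combinatorial meaning: up to the normalization $\kappa(G)$, it counts spanning trees that use the edge $e$.

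First I would record the bijection underlying this. Fix an edge $e = \{e^+, e^-\} \in E$. If $T$ is a spanning tree containing $e$, then deleting $e$ disconnects $T$ into exactly two components, one containing $e^+$ and the other containing $e^-$; hence $T \setminus e$ is a two-forest separating $e^+$ from $e^-$. Conversely, adding $e$ back to any two-forest that separates $e^+$ and $e^-$ reconnects the two components into a spanning tree. This gives a bijection between spanning trees containing $e$ and the two-forests counted by $\kappa_2(e^+ \mid e^-)$, so that $\#\{T \in \forests_1(G) : e \in T\} = \kappa_2(e^+\mid e^-)$. Combined with \eqref{eq:resistance-trees}, this yields
\[
	r(e^+, e^-) = \frac{\kappa_2(e^+\mid e^-)}{\kappa(G)} = \frac{\#\{T \in \forests_1(G) : e \in T\}}{\kappa(G)} \, .
\]

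The second step is to sum over all edges and exchange the order of summation. Writing $E(T)$ for the edge set of a spanning tree $T$, I get
\[
	\sum_{e \in E} r(e^+, e^-) = \frac{1}{\kappa(G)} \sum_{e \in E} \#\{T : e \in T\} = \frac{1}{\kappa(G)} \sum_{T \in \forests_1(G)} |E(T)| \, .
\]
Since every spanning tree of a connected graph on $|V|$ vertices has exactly $|V| - 1$ edges, the inner quantity is $|E(T)| = |V| - 1$ for each $T$, and there are $\kappa(G)$ spanning trees in total, giving $\sum_{e \in E} r(e^+, e^-) = |V| - 1$.

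There is essentially no hard step here; the only point requiring care is the bijection in the first step, where one must verify that deleting a genuine edge of a spanning tree always produces precisely two components with the endpoints of that edge separated (this is where it matters that $e \in E$, so that it can be added back). As an alternative one could argue spectrally: using $r(e^+,e^-) = B_{\cdot,e}^\tr L^+ B_{\cdot,e}$, where $B_{\cdot,e}$ is the $e$-th column of the incidence matrix and $L^+$ is the Moore--Penrose pseudoinverse of $L$, together with $\sum_{e \in E} B_{\cdot,e} B_{\cdot,e}^\tr = B B^\tr = L$, the sum collapses to $\operatorname{tr}(L^+ L) = \operatorname{rank}(L) = |V| - 1$. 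The combinatorial argument above is cleaner, however, and stays entirely within the tools already developed in this section.
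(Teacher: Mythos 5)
Your proof is correct, and it takes a genuinely different route from the paper, which in fact offers no proof of Theorem~\ref{thm:foster} at all: it cites Foster's original article and remarks only that the identity is a ``trace formula,'' the left-hand side being the trace of the projection of $\RR^E$ onto the cut space of $G$. That remark is precisely the spectral alternative you sketch at the end, since $\sum_{e\in E} B_{\cdot,e}^\tr L^+ B_{\cdot,e} = \operatorname{tr}(L^+ L) = \operatorname{rank}(L) = |V|-1$ for a connected graph. Your main argument is instead combinatorial and self-contained: it needs only Kirchhoff's formula \eqref{eq:resistance-trees} and a double count of pairs $(T,e)$ with $e \in T$. The bijection you verify (deleting an edge of a spanning tree yields a two-forest separating that edge's endpoints; adding the edge back reconnects it) is sound, including for the multigraphs the paper allows --- a spanning tree can contain at most one edge from any parallel class, and loops are excluded by convention. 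It is worth pointing out that this is exactly the bijection the paper itself uses to prove Lemma~\ref{lem:average-cut}: since $\kappa_2(e^+|e^-)$ counts precisely the two-forests $F$ with $e \in \cut F$, your intermediate identity $\sum_{e\in E} \kappa_2(e^+|e^-) = \kappa(G)\left(|V|-1\right)$ is equivalent to Lemma~\ref{lem:average-cut}, and Foster's identity then follows upon dividing by $\kappa(G)$; so within this paper's framework, Foster's identity and Lemma~\ref{lem:average-cut} are the same double count viewed through \eqref{eq:resistance-trees}. What your route buys is elementarity and consistency with the tools of \S\ref{sec:forests}; what the paper's trace-formula viewpoint buys is a one-line conceptual explanation that transfers immediately to the weighted setting of \eqref{eq:main-3-lengths}, where $\sum_{e\in E} r(e^+,e^-)/\ell(e) = |V|-1$ follows from the same projection argument.
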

Note that for a tree, $r(e^+,e^-) = 1$ for each edge. 
Thus Foster's identity generalizes the fact that a tree contains $|V|-1$ edges. 
Foster's identity is a ``trace formula'', as the left-hand side is the trace of the projection matrix from $\mathbb{R}^E$ onto the ``cut space'' of $G$, and the cut space has dimension $|V| - 1$ (see, e.g., \cite[Section 7]{dejong-shokrieh}).

\begin{cor} 
\label{cor:res-squared}
For a connected graph $G = (V,E)$,
\[
	\sum_{e\in E} r(e^+, e^-)^2 \;\geq\; \frac{(|V|-1)^2}{|E|} \,  .
\]
\end{cor}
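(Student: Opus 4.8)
The plan is to obtain this as a direct consequence of Foster's identity (Theorem~\ref{thm:foster}) via the Cauchy--Schwarz inequality. The quantity on the left is a sum of squares of the edge resistances, while Foster's identity controls the sum of the resistances themselves; comparing a sum of squares to the square of a sum is exactly the situation Cauchy--Schwarz is designed for.

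Concretely, I would apply the Cauchy--Schwarz inequality to the two vectors $\big(r(e^+,e^-)\big)_{e\in E}$ and $(1)_{e\in E}$ in $\RR^E$. This yields
\[
	\left( \sum_{e\in E} r(e^+,e^-) \right)^2 \;\leq\; \left( \sum_{e\in E} r(e^+,e^-)^2 \right) \left( \sum_{e\in E} 1 \right).
\]
The second factor on the right is simply $|E|$, and by Foster's identity the left-hand side equals $(|V|-1)^2$. Substituting these and dividing by $|E|$ gives the claimed bound
\[
	\sum_{e\in E} r(e^+,e^-)^2 \;\geq\; \frac{(|V|-1)^2}{|E|}.
\]

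There is no real obstacle here: the argument is a one-line application of Cauchy--Schwarz once Foster's identity is in hand, and I expect the entire proof to be essentially the display above. The only thing worth remarking is the equality case, which occurs precisely when the vector $\big(r(e^+,e^-)\big)_{e\in E}$ is a scalar multiple of the all-ones vector, i.e.\ when every edge has the same effective resistance $r(e^+,e^-) = (|V|-1)/|E|$; this is the point that later makes the bound in Theorem~\ref{thm:main-1} tight for edge-transitive graphs, so it may be worth recording even though it is not needed for the inequality itself.
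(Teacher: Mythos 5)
Your proof is correct and is exactly the paper's argument: the paper proves Corollary~\ref{cor:res-squared} in one line as a consequence of Foster's identity (Theorem~\ref{thm:foster}) via the Cauchy--Schwarz inequality, which is precisely the computation you spell out. Your additional remark on the equality case is accurate but not part of the paper's proof.
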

\begin{proof}
This follows from Theorem~\ref{thm:foster} by the Cauchy--Schwarz inequality.
\end{proof}

\subsection{Curvature-resistance identities}
Let $R = R(G)$ denote the {\em (effective) resistance matrix} of $G$, which is the matrix in $\RR^{V\times V}$ with entries $R_{v,w} = r(v,w)$.

\begin{dfn}\label{def:mu}
Given a connected graph $G = (V,E)$, let $\boldmu = \boldmu(G)$ be 
the vector in $\RR^V$ defined by
\begin{equation}
\label{eq:mu}
\boldmu_x = 1 - \frac12 \sum_{e \in \Nedge(x)} r(e^+,e^-) 
\end{equation}
for each $x \in V$, where the sum is over all edges incident to $x$.
We call $\boldmu$ the {\em curvature vector} of $G$, following Devriendt and Lambiotte~\cite{devriendt-lambiotte}.
\end{dfn}

\begin{rmk} 
\label{rmk:curvature}
\begin{enumerate}[(i)]
\item
Although the terminology ``curvature vector'' is introduced 
in \cite{devriendt-lambiotte},
this vector has made appearances earlier in the literature on graphs and effective resistance, e.g., \cite{bapat}.
\item
The vector $\boldmu$ is a discrete analogue of the {\em canonical equilibrium measure} $\mucan$ on a metric graph; 
see Remark~\ref{rmk:analog} (ii).
\end{enumerate}
\end{rmk}

\begin{notation*}
Let $\bone$ denote the vector with all entries equal to one.
\end{notation*}

\begin{prop}
\label{prop:mu-sum}
Let $G = (V, E)$ be a connected graph with curvature vector $\boldmu$. Then $ \boldmu^\tr \bone = 1$.
\end{prop}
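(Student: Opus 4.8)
The plan is to compute $\boldmu^\tr \bone = \sum_{x \in V} \boldmu_x$ directly from the definition \eqref{eq:mu} and show that the edge-resistance terms contribute exactly $|V|-1$, which then cancels against the constant terms. Writing out the sum,
\[
	\boldmu^\tr \bone = \sum_{x \in V} \boldmu_x = \sum_{x \in V} \left( 1 - \frac12 \sum_{e \in \Nedge(x)} r(e^+, e^-) \right) = |V| - \frac12 \sum_{x \in V} \sum_{e \in \Nedge(x)} r(e^+, e^-) \, .
\]
The key combinatorial observation is that the double sum counts each edge exactly twice: when we range over all vertices $x$ and all edges $e$ incident to $x$, each edge $e$ is counted once for each of its two endpoints $e^+$ and $e^-$. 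Hence the double sum equals $2 \sum_{e \in E} r(e^+, e^-)$, and the factor of $\frac12$ turns this into a single sum $\sum_{e \in E} r(e^+, e^-)$.

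The final step is to invoke Foster's Identity (Theorem~\ref{thm:foster}), which gives $\sum_{e \in E} r(e^+, e^-) = |V| - 1$. Substituting, we obtain
\[
	\boldmu^\tr \bone = |V| - (|V| - 1) = 1 \, ,
\]
as desired. I would present this as a short two-line computation, making the edge-counting swap of summation order explicit since that is the only nonroutine point.

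There is really no substantial obstacle here: the proposition is essentially an immediate corollary of Foster's Identity, and the only thing to be careful about is the bookkeeping of the factor $\frac12$ against the double-counting of edges. The one point worth stating cleanly is the interchange $\sum_{x \in V} \sum_{e \in \Nedge(x)} = 2\sum_{e \in E}$, which holds because no loop edges are allowed (so each edge has two distinct endpoints, each contributing once). Since the paper assumes no loops, this is automatic, and the proof is complete.
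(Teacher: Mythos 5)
Your proof is correct and follows the same route as the paper: expand $\sum_{x\in V}\boldmu_x$, use the fact that the double sum over vertex--incident-edge pairs counts each edge twice, and conclude via Foster's identity (Theorem~\ref{thm:foster}) that the total is $|V| - (|V|-1) = 1$. The paper's proof is exactly this computation, so there is nothing to change.
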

\begin{proof}
This is a consequence of Foster's identity (Theorem~\ref{thm:foster}):
\[
	\sum_{x \in V} \boldmu_x 
	= \sum_{x \in V} 1 - \frac12 \sum_{x \in V} \sum_{e \in \Nedge(x)} r(e^+,e^-) = |V| - \sum_{e \in E}  r(e^+,e^-) = 1. \qedhere
\]
\end{proof}

\begin{eg}
Suppose $G$ is the house graph (Figure~\ref{fig:house}). 
Then one can compute, using e.g. \eqref{eq:resistance-trees}, that
\medskip
\[
\tiny{	
R = \frac1{11} \begin{pmatrix}
	0 & 8 & 8 & 10 & 13 \\
	8 & 0 & 10 & 8 & 13 \\
	8 & 10 & 0 & 6 & 7 \\
	10 & 8 & 6 & 0 & 7 \\
	13 & 13 & 7 & 7 & 0
	\end{pmatrix}
	, \qquad\qquad
	\boldmu = \frac1{22}\begin{pmatrix}
	6 \\ 6 \\ 1 \\ 1 \\ 8
	\end{pmatrix} \,  .
}
\]
One can verify the identities
\[
	\sum_{e\in E} r(e^+,e^-) = {\textstyle\frac{8 + 8 + 8 + 6 + 7 + 7}{11}} = 4 = |V| -1 \, ,
	\qquad\qquad
	\sum_{x \in V} \boldmu_x = {\textstyle\frac{6+6+1+1+8}{22}} = 1\, .
\]
\end{eg}

We now state identities relating the resistance matrix and curvature vector.
\begin{prop}
\label{prop:curvature-resistance}
Let $G$ be a connected graph with Laplacian matrix $L$, resistance matrix $R$, and curvature vector $\boldmu$. 
Then
\begin{enumerate}[(a)]
\item 
$\displaystyle
	I + \frac{1}{2} LR = \boldmu \bone^\tr,
$

\item 
$\displaystyle
	R\boldmu = (\boldmu^\tr R \boldmu) \bone ,
$

\item 
$\displaystyle
	R + \frac{1}{2} RLR = (\boldmu^\tr R \boldmu) \bone \bone^\tr.
$

\end{enumerate}
\end{prop}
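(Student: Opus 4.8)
The plan is to establish part (a) by a direct entrywise computation, and then to obtain parts (b) and (c) as formal consequences of (a) using only matrix algebra together with $\boldmu^\tr \bone = 1$ (Proposition~\ref{prop:mu-sum}) and the fact that $\ker L = \zspan\{\bone\}$ for a connected graph. Since (b) and (c) are short algebraic deductions, the real content lies in (a).

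For (a) I would compute the $(x,y)$-entry of $LR$. Using that the rows of $L$ sum to zero, I would write
\[
	(LR)_{x,y} = \sum_{z \in V} L_{x,z}\, r(z,y) = \sum_{z \in V} L_{x,z}\big(r(z,y) - r(x,y)\big) = \sum_{z \in \Nvert(x)} \big(r(x,y) - r(z,y)\big),
\]
where the last equality uses $L_{x,x} = \deg(x)$ and $L_{x,z} = -(\text{number of edges between } x,z)$, so the diagonal term drops out and each neighbor $z$ (counted with edge multiplicity) contributes $r(x,y) - r(z,y)$. Next I would rewrite each summand via Proposition~\ref{prop:res-identities}(c), applied with ground vertex $y$, as $r(x,y) - r(z,y) = 2 j_x(z,y) - r(x,z)$. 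Summing the two pieces separately: the sum $\sum_{z \in \Nvert(x)} j_x(z,y)$ equals $1 - \delta_{x,y}$ by Proposition~\ref{prop:res-identities}(d) (using the symmetry $j_x(z,y) = j_x(y,z)$), while $\sum_{z \in \Nvert(x)} r(x,z) = \sum_{e \in \Nedge(x)} r(e^+,e^-) = 2(1 - \boldmu_x)$ by Definition~\ref{def:mu}. Combining gives $(LR)_{x,y} = 2\boldmu_x - 2\delta_{x,y}$, so that $\big(I + \tfrac12 LR\big)_{x,y} = \boldmu_x = (\boldmu\bone^\tr)_{x,y}$, which is (a).

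For (b) I would multiply (a) on the right by $\boldmu$. Since $\bone^\tr \boldmu = 1$, the right-hand side becomes $\boldmu(\bone^\tr\boldmu) = \boldmu$, so the identity collapses to $\tfrac12 LR\boldmu = 0$, i.e.\ $LR\boldmu = 0$. Because $G$ is connected, $\ker L = \zspan\{\bone\}$, whence $R\boldmu = c\,\bone$ for some scalar $c$; left-multiplying by $\boldmu^\tr$ and using $\boldmu^\tr\bone = 1$ identifies $c = \boldmu^\tr R\boldmu$, which is (b). For (c) I would transpose (a) — using that $L$ and $R$ are symmetric, so $(LR)^\tr = RL$ — to get $I + \tfrac12 RL = \bone\boldmu^\tr$, and then right-multiply by $R$ to obtain $R + \tfrac12 RLR = \bone(\boldmu^\tr R)$. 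Finally I would substitute $\boldmu^\tr R = (R\boldmu)^\tr = (\boldmu^\tr R\boldmu)\bone^\tr$ from (b) to reach $R + \tfrac12 RLR = (\boldmu^\tr R\boldmu)\bone\bone^\tr$, which is (c).

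The main obstacle is the entrywise computation in (a): one must correctly convert the Laplacian row into a sum over the neighbor multiset $\Nvert(x)$, keeping track of edge multiplicities and the vanishing diagonal term, and then apply Proposition~\ref{prop:res-identities}(c),(d) with the right choice of ground vertex and the symmetry of $j_x$, so that the combinatorial sum $\sum_{e\in\Nedge(x)} r(e^+,e^-)$ matches the definition of $\boldmu_x$. Once the bookkeeping for (a) is in place, parts (b) and (c) are routine.
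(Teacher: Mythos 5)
Your proposal is correct. Part (a) is essentially the paper's own computation: both expand $(LR)_{x,y}$ into a sum over the neighbor multiset, apply Proposition~\ref{prop:res-identities}(c) with ground vertex $y$ (resp.\ $q$) and then (d), and match $\sum_{e\in\Nedge(x)} r(e^+,e^-)$ against Definition~\ref{def:mu}. Where you genuinely diverge is part (b). The paper left-multiplies (a) by $R$ to get $R + \tfrac12 RLR = (R\boldmu)\bone^\tr$, observes that the left-hand side is symmetric, and concludes that the rank-one right-hand side must be symmetric, which forces $R\boldmu$ to be a scalar multiple of $\bone$; this is a purely formal symmetry trick, and it simultaneously produces the identity needed for (c). You instead right-multiply (a) by $\boldmu$, use $\bone^\tr\boldmu = 1$ to collapse the equation to $LR\boldmu = 0$, and then invoke $\ker L = \zspan\{\bone\}$ for a connected graph. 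Your route is shorter and more conceptual, but it imports a spectral fact about the Laplacian that the paper never needs to state; note, though, that this fact is available from the paper's own Theorem~\ref{thm:matrix-tree}(a), since $\det L[\overline{q}] = \kappa(G) > 0$ together with $L\bone = 0$ pins down the kernel, so there is no gap. For (c) the difference is cosmetic: the paper substitutes (b) into its already-derived identity $R + \tfrac12 RLR = (R\boldmu)\bone^\tr$, while you transpose (a) and right-multiply by $R$ before substituting; both are one-line algebra. In both places the scalar is identified the same way, via $\boldmu^\tr\bone = 1$ from Proposition~\ref{prop:mu-sum}.
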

\begin{proof}
(a)
It suffices to show that the $(x,q)$-entry of the matrix $LR$ satisfies
\begin{equation}
\label{eq:LR-product}
	(LR)_{x, q} = \begin{cases}
	2 \boldmu_x &\text{if } x \neq q, \\
	-2 + 2\boldmu_x &\text{if } x = q.
	\end{cases}
\end{equation}
Computing this matrix entry, we obtain
\begin{align}
(LR)_{x, q} 
= \sum_{y \in V} L_{x, y}\, R_{y, q} 
&= \sum_{y \in \Nvert(x)} (r(x, q) - r(y, q)) \\
&= \sum_{y \in \Nvert(x)} (2 j_x(y, q) - r(x, y)) \\
&= 2 \sum_{y \in \Nvert(x)} j_x(y, q) - \sum_{y \in \Nvert(x)} r(x, y) \\
&= 2 \sum_{y \in \Nvert(x)} j_x(y, q) + (2 \boldmu_x - 2)\, ,
\end{align}
where in the second line we use Proposition~\ref{prop:res-identities} (c) and in the last line we use the definition of $\boldmu$ \eqref{def:mu}.

By Proposition~\ref{prop:res-identities} (d), the $j$-function satisfies the identity
\[
	\sum_{y \in \Nvert(x)} j_x(y, q) = \begin{cases}
	1 &\text{if } x \neq q ,\\
	0 &\text{if } x = q,
	\end{cases}
\]
so our claim~\eqref{eq:LR-product} follows.

(b) 
From the identity in part (a),
multiply both sides by $R$ on the left to obtain
\begin{equation} \label{eq:RLR}
	R + \frac{1}{2} RLR = (R \boldmu) \bone^\tr \, .
\end{equation}
The left-hand side of \eqref{eq:RLR} is symmetric, since both $R$ and $L$ are symmetric matrices.
Thus the right-hand side must also be symmetric: 
$(R \boldmu) \bone^\tr = \bone (R \boldmu)^\tr$. 
This implies that 
\begin{equation} \label{eq:Rmu}
R \boldmu = \lambda \bone \qquad\text{for some real scalar }\lambda.
\end{equation} 
It remains to show that $\lambda = \boldmu^\tr R \boldmu$. 
For this, we may use Proposition~\ref{prop:mu-sum}: 
\begin{equation} \label{eq:lambd}
	\lambda = \lambda \times 1 = \lambda (\boldmu^\tr \bone) = \boldmu^\tr (\lambda \bone) = \boldmu^\tr R \boldmu \, .
\end{equation}

(c) 
The identity follows from combining \eqref{eq:RLR}, \eqref{eq:Rmu}, and \eqref{eq:lambd}.
\end{proof}

\begin{rmk}
\hfill
\begin{enumerate}[(i)]
\item
From Proposition~\ref{prop:curvature-resistance} (a) we can deduce the identity $ L + \frac12 LRL = 0$,
which shows that $-\frac12 R$ is a {\em generalized inverse} for the Laplacian matrix $L$.
\item
The statements in Proposition~\ref{prop:curvature-resistance} can also be deduced from the work of Bapat~\cite[Theorem 3]{bapat},
where the following formula for the inverse of the resistance matrix is found:
\begin{equation}
\label{eq:res-inverse}
	R^{-1} = - \frac12 L  + \frac1{\boldmu^\tr R \boldmu} \boldmu \boldmu^\tr \, .
\end{equation}
Note that Bapat uses another vector in place of $\boldmu$ with a different normalization factor, namely $\bm{\tau} = 2 \boldmu$.
The expression \eqref{eq:res-inverse} is invariant under scaling $\boldmu$ by a nonzero factor.
\end{enumerate}
\end{rmk}

\section{Gamma constant}
\label{sec:gamma}
For a graph $G$ and a base vertex $q$, we are interested in the $\ell^2$-squared norm of the variation of the resistances $r(\cdot, q)$.
\begin{dfn}
Let $G=(V, E)$ be a connected graph, and fix a vertex $q \in V$.  The {\em gamma constant} of $G$, denoted by $\secondsum(G)$, is defined as
\begin{equation}
\label{eq:gamma-def}
	\secondsum(G) =  \frac1{4} \sum_{e \in E} (r(e^+,q) - r(e^-,q))^2 \, .
\end{equation}
\end{dfn}

\begin{rmk} 
\label{rmk:gamma}
\begin{enumerate}[(i)]
\item It is clear that $\secondsum(G) \geq 0$.
\item It is a consequence of Proposition~\ref{prop:tau-equiv} that $\secondsum(G)$ is indeed independent of the choice of the base vertex $q$.
\item The gamma constant is a discrete analogue of the {\em tau constant} on a metric graph; 
see Remark~\ref{rmk:analog} (i).
\end{enumerate}
\end{rmk}

\begin{prop}
\label{prop:tau-equiv}
Given a connected graph $G$,
let $L$ denote the Laplacian matrix,
$B$ the incidence matrix,
$R$ the resistance matrix,
and $\boldmu$ the curvature vector \eqref{eq:mu}.
\hfill
\begin{enumerate}[(a)]
\item 
All diagonal entries of $ \frac14 RLR$ are equal to $\secondsum(G)$.
\item 
$\secondsum(G) = \frac12 \boldmu^\tr R \boldmu$.
\item 
All vector entries of $\frac12 R \boldmu$ are equal to $\secondsum(G)$.
\end{enumerate}
\end{prop}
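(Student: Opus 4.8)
The plan is to prove part (a) by a direct matrix computation that exposes the defining sum of $\secondsum(G)$ inside a diagonal entry, and then to obtain parts (b) and (c) as formal consequences of the identities already recorded in Proposition~\ref{prop:curvature-resistance}.

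For part (a), I would begin by writing out the $(q,q)$-entry of $\frac14 RLR$ at an arbitrary vertex $q$ and recognizing the definition \eqref{eq:gamma-def} inside it. The key device is the factorization $L = BB^\tr$, which gives $RLR = (B^\tr R)^\tr (B^\tr R)$, so that $(RLR)_{q,q}$ is the squared $\ell^2$-norm of the $q$-th column of $B^\tr R$. The crucial observation is that the $(e,q)$-entry of $B^\tr R$ is $\sum_{v \in V} B_{v,e}\, r(v,q) = r(e^+,q) - r(e^-,q)$, the resistance difference across the edge $e$ relative to $q$. Summing squares over $e$ then yields $(RLR)_{q,q} = \sum_{e \in E} (r(e^+,q) - r(e^-,q))^2 = 4\secondsum(G)$, so $\frac14 (RLR)_{q,q} = \secondsum(G)$. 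More generally, the $(x,x)$-entry computes the gamma constant with base vertex $x$.

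To show that all diagonal entries coincide, and to obtain part (b) at the same time, I would invoke Proposition~\ref{prop:curvature-resistance}(c), namely $R + \frac12 RLR = (\boldmu^\tr R \boldmu)\, \bone \bone^\tr$. Reading off the diagonal and using that $R$ has zero diagonal (since $r(x,x) = 0$) while $\bone \bone^\tr$ has unit diagonal gives $\frac12 (RLR)_{x,x} = \boldmu^\tr R \boldmu$ for every $x$. Hence every diagonal entry of $\frac14 RLR$ equals the single scalar $\frac12 \boldmu^\tr R \boldmu$, independent of $x$; comparing with the computation above proves part (a) and simultaneously establishes the identity $\secondsum(G) = \frac12 \boldmu^\tr R \boldmu$ of part (b). Part (c) is then immediate from Proposition~\ref{prop:curvature-resistance}(b): since $R\boldmu = (\boldmu^\tr R \boldmu)\, \bone$, every entry of $\frac12 R\boldmu$ equals $\frac12 \boldmu^\tr R \boldmu = \secondsum(G)$.

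The only genuine computation is the factorization argument in part (a), and the main obstacle there is correctly identifying $(B^\tr R)_{e,q}$ with the across-edge resistance difference $r(e^+,q) - r(e^-,q)$ after writing $L = BB^\tr$. Once that link between the matrix expression and the defining sum is in place, the remainder reduces to reading diagonal entries and single columns off the identities of Proposition~\ref{prop:curvature-resistance}, with no further estimation needed.
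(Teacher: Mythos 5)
Your proposal is correct and follows essentially the same route as the paper: part (a) via the factorization $L = BB^\tr$ and the identification of the entries of $B^\tr R$ with the across-edge differences $r(e^+,q) - r(e^-,q)$, then parts (b) and (c) by reading the diagonal of Proposition~\ref{prop:curvature-resistance}(c) (using the zero diagonal of $R$) and the entries of $R\boldmu$ from Proposition~\ref{prop:curvature-resistance}(b). Your explicit remark that the $(x,x)$-entry computes the gamma constant with base vertex $x$, so that the coincidence of diagonal entries proves base-point independence, is a point the paper leaves to a remark; otherwise the arguments are identical.
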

\begin{proof}
\begin{enumerate}[(a)]
\item 
It is clear from the defining equation \eqref{eq:gamma-def} that $\secondsum$ is equal to the $q$-th diagonal entry of $\frac14 (RB) (RB)^\tr = \frac14 (RB)(B^\tr R)$.
Then apply the matrix identity $L = BB^\tr$.

\item 
Proposition~\ref{prop:curvature-resistance} (c) implies that 
\[
	\frac14 RLR = - \frac12 R + \frac12(\boldmu^\tr R \boldmu) \bone \bone^\tr \, .
\] 
Since diagonal entries of $R$ are all zero, each diagonal entry of $\frac14 RLR$ is equal to $\frac12 \boldmu^\tr R \boldmu$. The result follows from part (a).

\item 
Proposition~\ref{prop:curvature-resistance} (b) implies that
$\displaystyle
\frac12 R\boldmu = \frac12 (\boldmu^\tr R \boldmu) \bone
$.
The result follows from part (b).
\end{enumerate}
\end{proof}

\begin{rmk}
\begin{enumerate}[(i)]
\item
Part (a) of Proposition~\ref{prop:tau-equiv} suggests the following alternate summation formula for the gamma constant:
\[
	\secondsum(G) = \frac{1}{4}\sum_{x,y \in V} L_{x,y}\, r(x,q) r(y,q) \, .
\]

\item 
The gamma constant $\secondsum(G)$ is independently studied by Devriendt~\cite[Section 3.4 and Corollary 3.10]{devriendt} under the name {\em resistance radius}, denoted by~$\sigma^2$.
\end{enumerate}
\end{rmk}

\subsection{Connection to capacity theory of metric graphs} \label{sec:cap}
Here we briefly describe the connection between $\secondsum(G)$ and the tau constant $\tau(\Gamma)$ of metric graphs. 
Historically, motivation for studying $\tau(\Gamma)$ came from arithmetic geometry (see, e.g., \cite{CR}, \cite{cinkir-inventiones}, \cite{dejong-delta}).

Let $\Gamma$ be the {\em metric graph} obtained from $G$ by replacing edges of $G$ with line segments of length $1$. 
The effective resistance $r(x, y)$ extends to a continuous function as $x, y$ vary over the entire metric graph $\Gamma$.
In terms of this resistance function, one has the following expression for the tau constant (see, e.g., \cite[Lemma 14.4]{BR-2007}):
fix a point $q \in \Gamma$, then
\begin{equation}
\label{eq:tau-1}
	\tau(\Gamma) = \frac{1}{4}\int_{\Gamma}{\left( \frac{\partial}{\partial x} r(x,q)\right)^2 \dd x} \, ,
\end{equation}
where $\dd x$ denotes the (piecewise) Lebesgue measure on $\Gamma$.

Alternatively, $\tau(\Gamma)$ can be interpreted as a certain ``capacity'', with respect to the potential kernel  $\frac12 r(x,y)$. 
It is shown in \cite[Theorem 2.11]{CR} that there exists a unique measure $\mucan$ on $\Gamma$ having total mass $1$, such that 
\begin{equation}
\label{eq:tau-2}
	\frac12 \int_{\Gamma} r(x, y) \, \dd\mucan(x)
	\qquad\text{is constant as $y$ varies.}
\end{equation}
This measure $\mucan$ is known as the {\em canonical measure} of $\Gamma$. 
In terms of $\mucan$, 
\begin{equation}
\label{eq:tau-3}
	\tau(\Gamma) = \frac12 \int \!\int_{\Gamma \times \Gamma} r(x, y) \, \dd\mucan(x)\, \dd\mucan(y)
\end{equation}
(\cite[Corollary 14.2]{BR-2007}).

\begin{rmk} \label{rmk:analog}
\begin{enumerate}[(i)]
\item
Comparing the expression for the tau constant in \eqref{eq:tau-1} with the expression for the gamma constant in \eqref{eq:gamma-def}, one can see that $\secondsum(G)$ is a natural discrete analogue of $\tau(\Gamma)$.
\item
Comparing the expression for the tau constant in \eqref{eq:tau-2} (resp. \eqref{eq:tau-3}) with the expression for the gamma constant in Proposition~\ref{prop:tau-equiv} (c) (resp. Proposition~\ref{prop:tau-equiv} (b)), one can see that $\boldmu$ is a natural discrete analogue of $\mucan$.
\end{enumerate}
\end{rmk}

Let $\Gamma$ be the metric graph obtained from $G$ by replacing edges of $G$ with unit length line segments as before
and let $\tau(G) = \tau(\Gamma)$.
One can show that ${\secondsum(G) \leq \tau(G)}$ for any graph. 
In fact, their difference has a concrete expression as a sum of squares. 
Suppose we define
\[
	\eta(G) = \frac{1}{12} \sum_{e \in E} \left(1 - r(e^+, e^-)\right)^2 \, .
\]
Then the following formula for $\tau(G)$ holds in terms of the combinatorics of $G$: 
\begin{equation} \label{eq:tau vs secondsum}
\tau(G) = \eta(G) + \secondsum(G) \, 
\end{equation}
(see \cite[Proposition 2.9]{cinkir-graph}, \cite[Theorem 11.2]{dJSh2}).

\subsection{Proof of Theorem~\ref{thm:main-3}} 
\label{sec:thmC}
In this section we prove Theorem~\ref{thm:main-3} from the introduction.
We start with an identity relating the counts of two-forests and spanning trees to resistances {\em and} $j$-functions.

\begin{prop}
\label{prop:forest-resistance}
Let $G = (V, E)$ be a connected graph. For any fixed $q\in V$ we have
\begin{equation}
	\frac{\kappa_2(G)}{\kappa(G)} = \sum_{v\in V} r(v,q) + \sum_{e\in E} j_q(e^+,e^-)^2 - \sum_{e\in E} r(e^+,q) r(e^-,q) \, .
\end{equation}
\end{prop}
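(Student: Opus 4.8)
The plan is to start from the Liu--Chow enumeration formula (Proposition~\ref{prop:forest-count}) and divide through by $\kappa(G)$, then translate every forest count into a resistance or $j$-value using the dictionary provided by \eqref{eq:resistance-trees} and \eqref{eq:potential-trees}. The only nontrivial input beyond these identities is Proposition~\ref{prop:3-forest}, which rewrites the three-forest count $\kappa_3(e^+|e^-|q)$ as a combination of two-forest counts; this is exactly what converts the second sum in Liu--Chow into the resistance and $j_q$ terms appearing in the statement.

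Concretely, I would first divide the Liu--Chow identity $\kappa_2(G) = \sum_{v\in V}\kappa_2(v|q) - \sum_{e\in E}\kappa_3(e^+|e^-|q)$ by $\kappa(G)$. The first sum is immediate: by \eqref{eq:resistance-trees} each term $\kappa_2(v|q)/\kappa(G)$ equals $r(v,q)$, producing $\sum_{v\in V} r(v,q)$. For the second sum, I would substitute Proposition~\ref{prop:3-forest} with $x=e^+$ and $y=e^-$, namely
\[
	\kappa(G)\,\kappa_3(e^+|e^-|q) = \kappa_2(e^+|q)\,\kappa_2(e^-|q) - \kappa_2(e^+e^-|q)^2 \, .
\]
Dividing this relation by $\kappa(G)^2$ and recognizing the three resulting ratios via \eqref{eq:resistance-trees} and \eqref{eq:potential-trees} gives
\[
	\frac{\kappa_3(e^+|e^-|q)}{\kappa(G)} = r(e^+,q)\,r(e^-,q) - j_q(e^+,e^-)^2 \, .
\]
Summing over $e\in E$ and inserting this into the (negated) second term yields exactly $\sum_{e\in E} j_q(e^+,e^-)^2 - \sum_{e\in E} r(e^+,q)\,r(e^-,q)$, which together with the first sum assembles the claimed identity.

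I do not expect a genuine obstacle here: the argument is a direct substitution, and the whole content is in correctly tracking the two different normalizations. The one point requiring care is that Proposition~\ref{prop:3-forest} carries a factor of $\kappa(G)$ on its left side, so after dividing the Liu--Chow formula by $\kappa(G)$ one must divide the rearranged three-forest identity by an additional $\kappa(G)$ — i.e., by $\kappa(G)^2$ in total — to land on genuine products $r(e^+,q)\,r(e^-,q)$ and squares $j_q(e^+,e^-)^2$ rather than unnormalized counts. The sign bookkeeping is equally routine: the minus sign in front of the second Liu--Chow sum combines with the minus sign on the $\kappa_2(e^+e^-|q)^2$ term to make the $j_q^2$ contribution appear with a positive sign, matching the statement.
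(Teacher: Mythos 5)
Your proposal is correct and follows exactly the paper's own proof: divide the Liu--Chow formula (Proposition~\ref{prop:forest-count}) by $\kappa(G)$, rewrite $\kappa_3(e^+|e^-|q)/\kappa(G)$ via Proposition~\ref{prop:3-forest} normalized by $\kappa(G)^2$, and translate the resulting ratios using \eqref{eq:resistance-trees} and \eqref{eq:potential-trees}. Your attention to the $\kappa(G)^2$ normalization and the sign bookkeeping matches the paper's substitution step precisely.
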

\begin{proof}
By Proposition~\ref{prop:forest-count} we have
\begin{equation}
\label{eq:14-1}
	\frac{\kappa_2(G)}{\kappa(G)} = \sum_{v \in V} \frac{\kappa_2(v|q)}{\kappa(G)} - \sum_{e \in E} \frac{\kappa_3(e^+| e^-| q)}{\kappa(G)} \, ,
\end{equation}
and by Proposition~\ref{prop:3-forest} we have
\begin{equation} 
\label{3-forest-divided}
	\frac{\kappa_3(e^+| e^-| q)}{\kappa(G)} 
	= \frac{\kappa_2(e^+|q)}{\kappa(G)} \cdot \frac{\kappa_2(e^-|q)}{\kappa(G)} - \left( \frac{\kappa_2(e^+e^-|q)}{\kappa(G)}\right)^2 \, .
\end{equation}
The result follows from substituting \eqref{3-forest-divided} in \eqref{eq:14-1},
and applying \eqref{eq:resistance-trees} and \eqref{eq:potential-trees}.
\end{proof}

We can now prove the main technical identity in this paper, which is equivalent to Theorem~\ref{thm:main-3} in the introduction.
\begin{thm}\label{thm:tau-to-forests}
Let $G$ be a connected graph. Then
\[
	\frac{\kappa_2(G)}{\kappa(G)} = 3 \secondsum(G) + \frac1{4}
\sum_{e\in E} r(e^+,e^-)^2 \,  .
\]
\end{thm}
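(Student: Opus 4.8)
The plan is to start from the resistance-and-$j$-function identity in Proposition~\ref{prop:forest-resistance},
\[
\frac{\kappa_2(G)}{\kappa(G)} = \sum_{v\in V} r(v,q) + \sum_{e\in E} j_q(e^+,e^-)^2 - \sum_{e\in E} r(e^+,q)\,r(e^-,q),
\]
and to eliminate the $j$-function in favor of resistances. By Proposition~\ref{prop:res-identities}(b) we have $j_q(e^+,e^-) = \tfrac12(r(e^+,q)+r(e^-,q)-r(e^+,e^-))$. Writing $a=r(e^+,q)$, $b=r(e^-,q)$, and $c=r(e^+,e^-)$ for a fixed edge $e$, the summand $j_q(e^+,e^-)^2 - r(e^+,q)r(e^-,q)$ expands to $\tfrac14 a^2+\tfrac14 b^2+\tfrac14 c^2-\tfrac12 ab-\tfrac12 ac-\tfrac12 bc$, whereas the per-edge summand of the target expression $3\secondsum(G)+\tfrac14\sum_e r(e^+,e^-)^2$ is $\tfrac14 c^2+\tfrac34(a-b)^2$. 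Subtracting these term by term, the theorem is seen to be equivalent to the single scalar identity
\[
\sum_{v\in V} r(v,q) = \frac12\sum_{e\in E}(r(e^+,q)-r(e^-,q))^2 + \frac12\sum_{e\in E} r(e^+,e^-)(r(e^+,q)+r(e^-,q)).
\]

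To establish this identity I would treat its two right-hand sums separately. The first is exactly $2\secondsum(G)$ by the definition of the gamma constant. For the second, I regroup the edge sum as a vertex sum,
\[
\sum_{e\in E} r(e^+,e^-)(r(e^+,q)+r(e^-,q)) = \sum_{v\in V} r(v,q)\sum_{e\in\Nedge(v)} r(e^+,e^-),
\]
and then invoke the definition of the curvature vector (Definition~\ref{def:mu}), which gives $\sum_{e\in\Nedge(v)} r(e^+,e^-) = 2(1-\boldmu_v)$. Hence the second sum equals $2\sum_{v} r(v,q) - 2\sum_v r(v,q)\boldmu_v$, and the remaining coupling is $\sum_v r(v,q)\boldmu_v = (R\boldmu)_q$. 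By Proposition~\ref{prop:curvature-resistance}(b), $R\boldmu = (\boldmu^\tr R\boldmu)\bone$ is a constant vector, so $(R\boldmu)_q = \boldmu^\tr R\boldmu = 2\secondsum(G)$ using Proposition~\ref{prop:tau-equiv}(b). Assembling the pieces, the right-hand side of the scalar identity becomes $2\secondsum(G) + \tfrac12(2\sum_v r(v,q) - 4\secondsum(G)) = \sum_v r(v,q)$, as required.

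The only genuinely delicate step is the evaluation of the weighted sum $\sum_e r(e^+,e^-)(r(e^+,q)+r(e^-,q))$; everything else is bookkeeping. The key realization is that this is precisely the coupling controlled by the curvature vector $\boldmu$, after which the curvature-resistance identities of Proposition~\ref{prop:curvature-resistance} collapse it to $2\secondsum(G)$. I expect the main risk to be sign and coefficient errors in the per-edge expansion, so I would guard against these by checking that both sides of the reduced scalar identity are manifestly symmetric under interchanging the two endpoints of each edge.
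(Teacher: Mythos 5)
Your proof is correct and uses essentially the same ingredients as the paper's: Proposition~\ref{prop:forest-resistance}, Proposition~\ref{prop:res-identities}(b), and the constancy of $R\boldmu$ (Propositions~\ref{prop:curvature-resistance}(b) and~\ref{prop:tau-equiv}(b), which together give Proposition~\ref{prop:tau-equiv}(c)). The only difference is organizational: the paper keeps the $j_q$ terms and cancels the cross term $\sum_{e} r(e^+,e^-)\,j_q(e^+,e^-)$ by adding two intermediate identities, whereas you eliminate $j_q$ at the outset via $j_q(e^+,e^-)=\tfrac12\bigl(r(e^+,q)+r(e^-,q)-r(e^+,e^-)\bigr)$, and your reduced scalar identity is precisely the paper's intermediate identity $2\secondsum(G)=\sum_{v} r(v,q)-\tfrac12\sum_{e} r(e^+,e^-)\bigl(r(e^+,q)+r(e^-,q)\bigr)$ in disguise.
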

\begin{proof}
It follows from the defining equation \eqref{eq:gamma-def} and Proposition~\ref{prop:res-identities} (e) that
 
\begin{align}\label{eq:22-2-1}
  \secondsum(G) &=  \frac14 \sum_{e\in E} r(e^+,e^-)^2 + \sum_{e\in E} r(e^+,e^-) j_q(e^+,e^-) \\
  &\quad\quad + \sum_{e\in E} j_q(e^+,e^-)^2 - \sum_{e\in E} r(e^+, q) r(e^-, q) \, .
\end{align}

By Proposition~\ref{prop:tau-equiv} (c), we also know $2\secondsum(G)$ equals the $q$-th entry of $R\boldmu$. Therefore
\begin{align} \label{eq:aux}
	2\secondsum(G) &= \sum_{v\in V} r(v,q) \left(1 - \frac12 \sum_{e \in \Nedge(v)} r(e^+,e^-)\right) \\
	&= \sum_{v\in V} r(v,q) - \frac{1}{2} \sum_{v\in V} r(v,q) \sum_{e \in \Nedge(v)}  r(e^+,e^-) \\
	&= \sum_{v\in V} r(v,q) - \frac12 \sum_{e\in E} r(e^+,e^-) \left(r(e^+,q) + r(e^-,q)\right) \, .
\end{align}

Using Proposition~\ref{prop:res-identities} (b), the last line of \eqref{eq:aux} can be rewritten as
\begin{equation}\label{eq:22-1-1}
2\secondsum(G) = \sum_{v\in V} r(v,q) - \frac12 \sum_{e\in E} r(e^+,e^-)^2 - \sum_{e\in E} r(e^+,e^-) j_q(e^+,e^-)  \, . 
\end{equation}

After adding the two identities \eqref{eq:22-2-1} and \eqref{eq:22-1-1} and cancellations, we obtain 
\begin{equation}
  3\secondsum(G) = 
   \sum_{v\in V} r(v,q) - \frac14 \sum_{e\in E} r(e^+,e^-)^2 + \sum_{e\in E} j_q(e^+,e^-)^2 - \sum_{e\in E} r(e^+,q)r(e^-,q) \, ,
\end{equation}
which we rearrange as
\begin{multline}
\label{eq:23-1}
3\secondsum(G) + \frac14 \sum_{e\in E} r(e^+,e^-)^2 
 = \sum_{v\in V} r(v,q) + \sum_{e\in E} j_q(e^+,e^-)^2 - \sum_{e\in E} r(e^+,q)r(e^-,q)\, .
\end{multline}
Finally, applying Proposition~\ref{prop:forest-resistance} to the right-hand side yields
\[
	3\secondsum(G) + \frac14 \sum_{e\in E} r(e^+,e^-)^2  = \frac{\kappa_2(G)}{\kappa(G)} \, . \qedhere
\]
\end{proof}

\begin{eg}
Suppose $G$ is the house graph (Figure~\ref{fig:house}). 
Then ${\kappa_2(G)}/{\kappa(G)} = {19}/{11}$, while
\[ 
	\sum_{e\in E} r(e^+,e^-)^2 = {\textstyle \frac{(8^2 + 8^2 + 8^2 + 6^2 + 7^2 + 7^2)}{11^2} = \frac{326}{121}
	\qquad\text{and}\qquad 
	\secondsum(G) = \frac12 \boldmu^\tr R \boldmu  = \frac{85}{242}}. 
\] 
One can readily verify the identity in Theorem~\ref{thm:tau-to-forests}. 

Furthermore, $\tau(G) = 9/22$ (this can be computed directly using \eqref{eq:tau vs secondsum}), $g=2$, and $|E| = 6 $. One can then verify the equivalent identity \eqref{eq:tau} given below.
\end{eg}

\begin{rmk}\label{rmk:tau2}
In terms of the tau constant, we have the following identity:
\begin{equation}\label{eq:tau}
\tau(G) = \frac13 \frac{\kappa_2(G)}{\kappa(G)} - \frac{1}{12}|E| + \frac{1}{6} g \, ,
\end{equation} 
where $g = |E|-|V|+1$ denotes the genus of $G$. 
This follows from Theorem~\ref{thm:tau-to-forests}, equation \eqref{eq:tau vs secondsum}, and Foster's identity (Theorem~\ref{thm:foster}).
\end{rmk}

\subsection{Graphs with edge lengths}

We mention that Theorem~\ref{thm:main-3} has a straightforward extension to graphs with {\em edge lengths}. 
Let $(G, \ell)$ be a graph together with a function $\ell : E \to \RR_{>0}$, assigning a positive real length to each edge.
Then is natural to define the {\em weighted counts} of spanning trees and two-forests as follows:
\[
	\kappa(G; \ell) = \sum_{T \in \mathcal F_1(G)} \prod_{e \not\in T} \ell(e) , 
	\qquad\qquad
	\kappa_2(G; \ell) = \sum_{F \in \mathcal F_2(G)} \prod_{e \not\in F} \ell(e) \,.
\]
The {\em resistance function} of a graph $(G, \ell)$ is defined by taking the effective resistance $r(u, v)$ that results from attaching a resistor of resistance $\ell(e)$ on each edge~$e$.
Then
\begin{equation}
\label{eq:main-3-lengths}
	\frac{\kappa_2(G; \ell)}{\kappa(G; \ell)} 
	= \frac{3}{4} \sum_{e \in E} \frac{\left( r(e^+,q) - r(e^-,q) \right)^2} {\ell(e)}
	+ \frac1{4}
\sum_{e\in E} \frac{r(e^+,e^-)^2}{\ell(e)} \,  .
\end{equation}
We omit the details of verifying \eqref{eq:main-3-lengths}, which are straightforward to adapt from our arguments above, using the length-adjusted notion of curvature 
\[
\boldmu_x = 1 - \frac12 \sum_{e \in \Nedge(x)} \frac{r(e^+, e^-)}{\ell(e)}\, .
\]

\begin{rmk}
Let $\Gamma$ be the metric graph obtained from $(G, \ell)$ by replacing each edge $e$ of $G$ with a line segment of length $\ell(e)$. 
In~\cite[equation (46)]{cinkir-inventiones}, Cinkir defines the constant $y(\Gamma)$ to be the expression on the right-hand side of \eqref{eq:main-3-lengths}
and proves various properties satisfied by $y(\Gamma)$.
However, this previous work does not make the connection to two-forests.
\end{rmk}

\section{Proofs of Theorems~\ref{thm:main-1} and \ref{thm:main-2}} 
\label{sec:thmAB}
In this section we prove the main theorems given in the introduction.
\begin{thm}
\label{thm:main-two-forests}
For $G = (V,E)$
a connected graph,
\[
	\frac{\kappa_2(G)}{\kappa(G)} \geq \frac{(|V|-1)^2}{4|E|} \, .
\]
\end{thm}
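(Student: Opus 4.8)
The plan is to derive this inequality as a direct consequence of the exact identity in Theorem~\ref{thm:tau-to-forests}, which expresses $\kappa_2(G)/\kappa(G)$ as a sum of two manifestly nonnegative pieces. Recall that Theorem~\ref{thm:tau-to-forests} gives
\[
	\frac{\kappa_2(G)}{\kappa(G)} = 3\secondsum(G) + \frac14 \sum_{e \in E} r(e^+,e^-)^2 \, .
\]
Since $\secondsum(G) \geq 0$ by Remark~\ref{rmk:gamma}(i) (it is defined as a sum of squares), the entire first term can simply be discarded, leaving the lower bound
\[
	\frac{\kappa_2(G)}{\kappa(G)} \geq \frac14 \sum_{e \in E} r(e^+,e^-)^2 \, .
\]
This reduces the theorem to bounding the single resistance-squared sum from below.

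The second step is purely to invoke Corollary~\ref{cor:res-squared}, which states that $\sum_{e \in E} r(e^+,e^-)^2 \geq (|V|-1)^2 / |E|$. That corollary is itself an immediate application of the Cauchy--Schwarz inequality to Foster's identity (Theorem~\ref{thm:foster}): writing $\sum_e r(e^+,e^-) = |V|-1$ and applying Cauchy--Schwarz over the $|E|$ edges gives $(|V|-1)^2 = \big(\sum_e r(e^+,e^-)\big)^2 \leq |E| \sum_e r(e^+,e^-)^2$. Combining this with the bound from the first step yields
\[
	\frac{\kappa_2(G)}{\kappa(G)} \geq \frac14 \cdot \frac{(|V|-1)^2}{|E|} = \frac{(|V|-1)^2}{4|E|} \, ,
\]
which is exactly the claimed inequality.

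Honestly, by the time one reaches this theorem there is no real obstacle left: the conceptual heavy lifting has already been done in establishing Theorem~\ref{thm:tau-to-forests}, whose proof required the curvature-resistance identities of Proposition~\ref{prop:curvature-resistance} and the forest-counting identities of \S\ref{sec:forests}. The only two ingredients needed here are the nonnegativity of $\secondsum(G)$ and the Cauchy--Schwarz step, both of which are elementary. If I were to flag a ``hard part,'' it would be a conceptual one: noticing that simply \emph{dropping} the $3\secondsum(G)$ term is the right move, i.e.\ that the resistance-squared sum alone already carries enough information (via Foster's identity) to recover the matroid-flavored bound. The sharpness discussion (that this bound can exceed Mason's bound for graphic matroids) is a separate matter and not part of proving the inequality itself.
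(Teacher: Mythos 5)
Your proposal is correct and follows exactly the paper's own proof: invoke Theorem~\ref{thm:tau-to-forests}, discard the nonnegative term $3\secondsum(G)$, and apply Corollary~\ref{cor:res-squared} (Cauchy--Schwarz plus Foster's identity). There is nothing to add or correct.
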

\begin{proof}
By Theorem~\ref{thm:tau-to-forests}, and the fact that $\secondsum(G) \geq 0$, we obtain
\[ 
	\frac{\kappa_2(G)}{\kappa(G)} 
	\;=\; 3\secondsum(G) + \frac1{4} \sum_{E} r(e^+,e^-)^2  
	\;\geq\; \frac1{4} \sum_{E} r(e^+,e^-)^2 \, .
\]
The result now follows from Corollary~\ref{cor:res-squared}.
\end{proof}

Given a graph $G=(V,E)$, let $\avgdeg$ denote its average degree:
\[
	\avgdeg =  \frac{1}{|V|}\sum_{v \in V} \deg{v} \, .
\]
By the ``handshaking lemma'', we have $\avgdeg = 2|E|/|V|$. 

\begin{thm}
\label{thm:main-cut-set}
Let $G = (V, E)$ be a connected graph.
For a uniformly random two-forest $F$, the expected size of the cut set $\cut F$ satisfies
\begin{equation}
	\EE(|\cut F|) \leq 2(\avgdeg) \left(1 + \frac1{|V|-1}\right) \, .
\end{equation}
\end{thm}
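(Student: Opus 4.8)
The plan is to combine the exact identity from Lemma~\ref{lem:average-cut} with the lower bound on $\kappa_2(G)/\kappa(G)$ just established in Theorem~\ref{thm:main-two-forests}. Lemma~\ref{lem:average-cut} gives
\[
	\EE(|\cut F|) = \frac{\kappa(G)}{\kappa_2(G)} (|V|-1) = \frac{|V|-1}{\kappa_2(G)/\kappa(G)} \, .
\]
Thus bounding $\EE(|\cut F|)$ from above is exactly equivalent to bounding the ratio $\kappa_2(G)/\kappa(G)$ from below. First I would substitute the inequality $\kappa_2(G)/\kappa(G) \geq (|V|-1)^2/(4|E|)$ into this expression.

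Carrying out that substitution, since the ratio appears in the denominator, the lower bound on the ratio yields an upper bound on $\EE(|\cut F|)$:
\[
	\EE(|\cut F|) = \frac{|V|-1}{\kappa_2(G)/\kappa(G)} \leq \frac{|V|-1}{(|V|-1)^2/(4|E|)} = \frac{4|E|}{|V|-1} \, .
\]
The next step is purely algebraic: I would rewrite $4|E|/(|V|-1)$ in terms of the average degree. Using the handshaking identity $\avgdeg = 2|E|/|V|$, we have $4|E| = 2|V|\cdot\avgdeg$, so
\[
	\frac{4|E|}{|V|-1} = 2(\avgdeg)\frac{|V|}{|V|-1} = 2(\avgdeg)\left(1 + \frac{1}{|V|-1}\right) \, ,
\]
which is exactly the claimed bound.

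The proof is therefore essentially a one-line chain of an identity, a substitution, and an algebraic rearrangement; there is no genuine obstacle, since all the work is hidden in Theorem~\ref{thm:main-two-forests} and Lemma~\ref{lem:average-cut}, both available by assumption. The only point requiring a moment of care is the direction of the inequality: because $\kappa_2(G)/\kappa(G)$ sits in the denominator after applying Lemma~\ref{lem:average-cut}, I must verify that a \emph{lower} bound on the ratio correctly produces an \emph{upper} bound on the expectation, which it does since all quantities involved are strictly positive for a connected graph with $|V| \geq 2$.
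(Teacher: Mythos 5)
Your proposal is correct and follows exactly the paper's own argument: apply Lemma~\ref{lem:average-cut} to express $\EE(|\cut F|)$ as $\kappa(G)(|V|-1)/\kappa_2(G)$, invoke Theorem~\ref{thm:main-two-forests} to bound this by $4|E|/(|V|-1)$, and rewrite via $\avgdeg = 2|E|/|V|$. Your extra remark about the inequality direction flipping in the denominator is a valid (if routine) point of care that the paper handles implicitly.
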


\begin{proof}
Lemma~\ref{lem:average-cut} states that
\[ 
	\mathbb E(|\cut F|) = \frac{\kappa(G)(|V|-1)}{\kappa_2(G)} \, ,
\]
and Theorem~\ref{thm:main-two-forests} implies the bound
\begin{equation}
\frac{\kappa(G) (|V|-1)}{\kappa_2(G)} \leq \frac{4|E|}{|V|-1} \, .
\end{equation}
It follows that
\[ 
  \mathbb E(|\cut F|) \leq 2\left(\frac{2|E|}{|V|}\right) \left( \frac{|V|}{|V|-1}\right) 
  = 2 (\avgdeg) \left( 1 + \frac1{|V|-1}\right) \, . \qedhere
\]
\end{proof}

\section{Examples}\label{sec:examples}

\begin{eg}[Complete graph]
Consider $G = K_n$, the complete graph on $n$ vertices.
The number of spanning trees, due to Cayley, is
\begin{equation}
\kappa(K_n) = n^{n-2} \, ,
\end{equation}
while the number of two-forests is 
\begin{align}
\kappa_2(K_n) 
&= \frac12 (n-1)(n+6) n^{n-4} \, ,
\end{align}
see Liu--Chow~\cite[Equation (22)]{liu-chow}.
Thus for this graph, the two-forest ratio is
\begin{equation}
\frac{\kappa_2(K_n)}{\kappa(K_n)} = \frac12 \left(1 - \frac1{n}\right) \left(1 + \frac{6}{n}\right) \, .
\end{equation}
In comparison, the lower bound for $\kappa_2(K_n) / \kappa(K_n)$ given by Theorem~\ref{thm:main-two-forests} is
\begin{equation}
\frac{(|V|-1)^2}{4|E|} = \frac{(n-1)^2}{2n(n-1)}
= \frac12 \left(1 - \frac1{n}\right) \, .
\end{equation}
Hence our bound gives the correct leading term as $n \to \infty$.

Regarding the average edge cut size of a random two-forest,
for the complete graph we have
\begin{equation}
\EE(|\cut F|) = \frac{\kappa(K_n)(|V|-1)}{\kappa_2(K_n)}
= 2n\left(1 - \frac{6}{n+6}\right).
\end{equation}
Since $K_n$ is $(n-1)$-regular, the inequality \eqref{eq:main-2-regular} gives the upper bound for $\EE(|\cut F|)$
\begin{equation}
2d \left(1 + \frac1{n-1}\right) = 2n \, .
\end{equation}
\end{eg}

\begin{eg}[Cycle graph]
Let $C_n$ denote the cycle graph, which has $n$ vertices and $n$ edges.
The number of spanning trees and two-forests of $C_n$ are given by
$
	\kappa(C_n) = n 
$ and $
	\kappa_2(C_n) = \binom{n}{2} ,
$
so
\[\displaystyle
	\frac{\kappa_2(C_n)}{\kappa(C_n)} = \frac12 n - \frac12 \, .
\]
For the cycle graphs, our results give the bound 
\[
\frac{\kappa_2(C_n)}{\kappa(C_n)} \geq \frac14 n + O(1) \qquad\text{as }n \to \infty \, .
\]
Here the lower bound does not match the leading term on the left as $n \to \infty$.
Theorem~\ref{thm:main-two-forests} ignores the $\secondsum(C_n)$ term of Theorem~\ref{thm:tau-to-forests}, which is not negligible asymptotically in this case.
\end{eg}

\begin{eg}[Wheel graph]
Let $W_n$ denote the wheel graph, which consists of an $n$-cycle and an additional ``central'' vertex connected to all cycle vertices.
The graph $W_n$ has $n + 1$ vertices and $2n$ edges.
\begin{figure}[h]
\centering
	\begin{tikzpicture}[scale=0.6]
	\coordinate (0) at (0,0);
	\coordinate (4) at (90:2);
	\coordinate (5) at (162:2);
	\coordinate (1) at (234:2);
	\coordinate (2) at (-54:2);
	\coordinate (3) at (18:2);
	
	\foreach \c in {0,1,2,3,4,5} {
		\filldraw[black] (\c) circle (2pt);
	}

	\draw (1) -- (2) -- (3) -- (4) -- (5) -- (1) -- (0);
	\draw (2) -- (0) -- (3);
	\draw (4) -- (0) -- (5);
	\end{tikzpicture}
\caption{Wheel graph $W_5$.}
\end{figure}
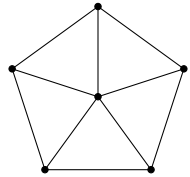
\begin{table}[h]
\[
	\begin{array}{c|ccccccccc}
	n & 1 & 2 & 3 & 4 & 5 & 6 & 7 & 8 \\ \hline \\[-0.8em]
	\kappa(W_n) & 1 & 5 & 16 & 45 & 121 & 320 & 841 & 2205 \\
	\kappa_2(W_n) & 1 & 4 & 15 & 52 & 170 & 534 & 1631 & 4880
\end{array}
\]
\caption{Number of spanning trees and two-forests for the wheel graphs.}
\label{tab:wheel}
\end{table}
The values $\kappa(W_n)$ and $\kappa_2(W_n)$ are listed in Table~\ref{tab:wheel} for small $n$ 
(see OEIS sequences A004146 and A117202 \cite{oeis}).

We have
\[
	\kappa(W_n) = \left( \frac{1 + \sqrt{5}}{2} \right)^{2n} + \left( \frac{1 - \sqrt{5}}{2} \right)^{2n} - 2 \, 
\]
(see Myers~\cite{myers}).
The number of two-forests is
\[
	\kappa_2(W_n) = \frac{n}{\sqrt{5}} \left( \frac{1 + \sqrt{5}}{2} \right)^{2n - 1} - \frac{n}{\sqrt{5}} \left( \frac{1 - \sqrt{5}}{2} \right)^{2n - 1} \, .
\]
Thus as $n \to \infty$,
\[
	\frac{\kappa_2(W_n)}{\kappa(W_n)} = \frac{1}{C} n + o(1) 
	\qquad\text{where } C = \sqrt{5} \cdot \left(\frac{1 + \sqrt{5}}{2}\right) \approx 3.618 \, .
\]
In comparison, Theorem~\ref{thm:main-two-forests} gives the lower bound
\[
	\frac{\kappa_2(W_n)}{\kappa(W_n)} \geq \frac{(|V| - 1)^2}{4|E|} = \frac{1}{8} n \, .
\]
\end{eg}

\begin{eg}[Toroidal grid graph]
Let $T_{n, n}$ denote the $n\times n$ toroidal grid graph, which is the nearest-neighbor graph on $\ZZ^2$ modulo $(n\ZZ)^2$.

For $G = T_{n, n}$, we have
$|V| = n^2$ and $|E| = 2n^2$. Our theorems show that 
\[
	\frac{\kappa_2(G)}{\kappa(G)} \geq 
	\frac{(n^2-1)^2}{ 8n^2}
	= \frac{n^2}{8}\left( 1 - \frac1{n^2}\right)^2
	\qquad\text{and}\qquad
	\EE(|\cut F|)\leq 8\left(1 + \frac{1}{n^2-1}\right) \, .
\]
Kassel, Kenyon, and Wu~\cite{kassel-kenyon-wu} show that for the $n\times n$ toroidal grid graph, $\EE(|\cut F|) = 8 + o(1)$ as $n\to \infty$
so our bound in sharp in this family of graphs.
\end{eg}

In \cite{kassel-kenyon-wu} one also finds the $r$-dimensional generalization (for any $r \geq 2$):
the nearest-neighbor graph in $\ZZ^r$ modulo $(n \ZZ)^r$ satisfies $\EE(|\cut F|) = 4r + o(1)$ as $n \to \infty$. 
Our bound is also sharp in this family.

\begin{bibdiv}
\begin{biblist}

\bib{anari-etal}{unpublished}{
      author={Anari, Nima},
      author={Liu, Kuikui},
      author={Oveis~Gharan, Shayan},
      author={Vinzant, Cynthia},
       title={Log-concave polynomials {III}: Mason's ultra-log-concavity
  conjecture for independent sets of matroid},
        date={2018},
         url={https://arxiv.org/abs/1811.01600},
        note={Preprint available at
  \href{https://arxiv.org/abs/1811.01600}{{\tt ar{X}iv:1811.01600}}},
}

\bib{BR-2007}{article}{
      author={Baker, Matt},
      author={Rumely, Robert},
       title={Harmonic analysis on metrized graphs},
        date={2007},
        ISSN={0008-414X},
     journal={Canad. J. Math.},
      volume={59},
      number={2},
       pages={225\ndash 275},
  url={https://doi-org.offcampus.lib.washington.edu/10.4153/CJM-2007-010-2},
      review={\MR{2310616}},
}

\bib{bapat}{article}{
      author={Bapat, R.~B.},
       title={Resistance matrix of a weighted graph},
        date={2004},
        ISSN={0340-6253},
     journal={MATCH Commun. Math. Comput. Chem.},
      number={50},
       pages={73\ndash 82},
      review={\MR{2037425}},
}

\bib{biggs}{article}{
      author={Biggs, Norman},
       title={Algebraic potential theory on graphs},
        date={1997},
        ISSN={0024-6093},
     journal={Bull. London Math. Soc.},
      volume={29},
      number={6},
       pages={641\ndash 682},
  url={https://doi-org.offcampus.lib.washington.edu/10.1112/S0024609397003305},
      review={\MR{1468054}},
}

\bib{branden-huh}{article}{
      author={Br\"{a}nd\'{e}n, Petter},
      author={Huh, June},
       title={Lorentzian polynomials},
        date={2020},
        ISSN={0003-486X},
     journal={Ann. of Math. (2)},
      volume={192},
      number={3},
       pages={821\ndash 891},
  url={https://doi-org.offcampus.lib.washington.edu/10.4007/annals.2020.192.3.4},
      review={\MR{4172622}},
}

\bib{CR}{article}{
      author={Chinburg, Ted},
      author={Rumely, Robert},
       title={The capacity pairing},
        date={1993},
        ISSN={0075-4102},
     journal={J. Reine Angew. Math.},
      volume={434},
       pages={1\ndash 44},
  url={https://doi-org.offcampus.lib.washington.edu/10.1515/crll.1993.434.1},
      review={\MR{1195689}},
}

\bib{cinkir-graph}{article}{
      author={Cinkir, Zubeyir},
       title={The tau constant of a metrized graph and its behavior under graph
  operations},
        date={2011},
     journal={Electron. J. Combin.},
      volume={18},
      number={1},
       pages={Paper 81, 42},
      review={\MR{2788698}},
}

\bib{cinkir-inventiones}{article}{
      author={Cinkir, Zubeyir},
       title={Zhang's conjecture and the effective {B}ogomolov conjecture over
  function fields},
        date={2011},
        ISSN={0020-9910,1432-1297},
     journal={Invent. Math.},
      volume={183},
      number={3},
       pages={517\ndash 562},
         url={https://doi.org/10.1007/s00222-010-0282-7},
      review={\MR{2772087}},
}

\bib{dejong-delta}{article}{
      author={de~Jong, Robin},
       title={Faltings delta-invariant and semistable degeneration},
        date={2019},
        ISSN={0022-040X,1945-743X},
     journal={J. Differential Geom.},
      volume={111},
      number={2},
       pages={241\ndash 301},
         url={https://doi.org/10.4310/jdg/1549422102},
      review={\MR{3909908}},
}

\bib{dejong-shokrieh}{article}{
      author={de~Jong, Robin},
      author={Shokrieh, Farbod},
       title={Metric graphs, cross ratios, and {R}ayleigh's laws},
        date={2022},
        ISSN={0035-7596},
     journal={Rocky Mountain J. Math.},
      volume={52},
      number={4},
       pages={1403\ndash 1422},
         url={https://doi.org/10.1216/rmj.2022.52.1403},
      review={\MR{4489167}},
}

\bib{dJSh2}{article}{
      author={de~Jong, Robin},
      author={Shokrieh, Farbod},
       title={Tropical moments of tropical {J}acobians},
        date={2023},
        ISSN={0008-414X,1496-4279},
     journal={Canad. J. Math.},
      volume={75},
      number={4},
       pages={1045\ndash 1075},
         url={https://doi.org/10.4153/s0008414x22000220},
      review={\MR{4620315}},
}

\bib{devriendt}{thesis}{
      author={Devriendt, Karel},
       title={Graph geometry from effective resistances},
        type={Ph.D. Thesis},
        date={2022},
        note={Available at
  \href{https://ora.ox.ac.uk/objects/uuid:bb174997-ef29-4bdd-a8cf-3db7b5ed5429}{https://ora.ox.ac.uk/objects/uuid:bb174997-ef29-4bdd-a8cf-3db7b5ed5429}},
}

\bib{devriendt-lambiotte}{article}{
      author={Devriendt, Karel},
      author={Lambiotte, Renaud},
       title={Discrete curvature on graphs from the effective resistance},
        date={2022},
     journal={J. Phys. Complex},
      volume={3},
      number={2},
       pages={025008},
         url={https://dx.doi.org/10.1088/2632-072X/ac730d},
}

\bib{foster}{incollection}{
      author={Foster, Ronald~M.},
       title={The average impedance of an electrical network},
        date={1948},
   booktitle={Reissner {A}nniversary {V}olume, {C}ontributions to {A}pplied
  {M}echanics},
   publisher={J. W. Edwards, Ann Arbor, Michigan},
       pages={333\ndash 340},
      review={\MR{0029773}},
}

\bib{kassel-kenyon-wu}{article}{
      author={Kassel, Adrien},
      author={Kenyon, Richard},
      author={Wu, Wei},
       title={Random two-component spanning forests},
        date={2015},
        ISSN={0246-0203},
     journal={Ann. Inst. Henri Poincar\'{e} Probab. Stat.},
      volume={51},
      number={4},
       pages={1457\ndash 1464},
         url={https://doi.org/10.1214/14-AIHP625},
      review={\MR{3414453}},
}

\bib{Kirchhoff1847}{article}{
      author={Kirchhoff, G.},
       title={{\"U}ber die {A}ufl{\"o}sung der {G}leichungen, auf welche man
  bei der {U}ntersuchung der linearen {V}erteilung galvanischer {S}tr{\"o}me
  gef{\"u}hrt wird},
        date={1847},
     journal={Ann. Phys. Chem.},
      number={72},
       pages={497\ndash 508},
}

\bib{liu-chow}{article}{
      author={Liu, C.~J.},
      author={Chow, Yutze},
       title={Enumeration of forests in a graph},
        date={1981},
        ISSN={0002-9939},
     journal={Proc. Amer. Math. Soc.},
      volume={83},
      number={3},
       pages={659\ndash 662},
         url={https://doi-org.offcampus.lib.washington.edu/10.2307/2044142},
      review={\MR{627715}},
}

\bib{mason}{inproceedings}{
      author={Mason, J.~H.},
       title={Matroids: unimodal conjectures and {M}otzkin's theorem},
        date={1972},
   booktitle={Combinatorics ({P}roc. {C}onf. {C}ombinatorial {M}ath., {M}ath.
  {I}nst., {O}xford, 1972)},
   publisher={Inst. Math. Appl., Southend-on-Sea},
       pages={207\ndash 220},
      review={\MR{0349445}},
}

\bib{myers}{article}{
      author={Myers, B.},
       title={Number of spanning trees in a wheel},
        date={1971},
     journal={IEEE Transactions on Circuit Theory},
      volume={18},
      number={2},
       pages={280\ndash 282},
}

\bib{myrvold}{article}{
      author={Myrvold, Wendy},
       title={Counting {$k$}-component forests of a graph},
        date={1992},
        ISSN={0028-3045},
     journal={Networks},
      volume={22},
      number={7},
       pages={647\ndash 652},
         url={https://doi.org/10.1002/net.3230220704},
      review={\MR{1189292}},
}

\bib{oeis}{unpublished}{
      author={{OEIS Foundation Inc.}},
       title={The {O}n-{L}ine {E}ncyclopedia of {I}nteger {S}equences},
        date={2023},
         url={https://oeis.org},
        note={Published electronically at \href{https://oeis.org}{{\tt
  https://oeis.org}}},
}

\bib{richman-shokrieh-wu}{unpublished}{
      author={Richman, Harry},
      author={Shokrieh, Farbod},
      author={Wu, Chenxi},
       title={On capacity of metric graphs},
        date={2023},
        note={In preparation},
}

\bib{sylvester1851xxxvii}{article}{
      author={Sylvester, James~Joseph},
       title={On the relation between the minor determinants of linearly
  equivalent quadratic functions},
        date={1851},
     journal={Philosophical Magazine},
      volume={1},
      number={4},
       pages={295\ndash 305},
}

\bib{tutte}{book}{
      author={Tutte, W.~T.},
       title={Graph theory},
      series={Encyclopedia of Mathematics and its Applications},
   publisher={Addison-Wesley Publishing Company, Advanced Book Program,
  Reading, MA},
        date={1984},
      volume={21},
        ISBN={0-201-13520-5},
        note={With a foreword by C. St. J. A. Nash-Williams},
      review={\MR{746795}},
}

\end{biblist}
\end{bibdiv}

\vspace{1cm}
\end{document}